\theoremstyle{plain}
\newtheorem{thm}{Theorem}
\newtheorem*{thm*}{Theorem}
\newtheorem{lemma}[thm]{Lemma}
\newtheorem{prop}[thm]{Proposition}
\newtheorem{cor}[thm]{Corollary}
\newtheorem*{claim*}{Claim}
\newcommand{\fakeenv}{} 
\newenvironment{restate}[2]  
{
  \renewcommand{\fakeenv}{#2} 
  \theoremstyle{plain}
  \newtheorem*{\fakeenv}{#1~\ref{#2}} 
  \begin{\fakeenv}
}
{
  \end{\fakeenv}
}
\theoremstyle{definition}
\newtheorem{defn}[thm]{Definition}
\newtheorem{eg}[thm]{Example}
\newtheorem{rmk}[thm]{Remark}
\numberwithin{equation}{section}
\numberwithin{thm}{section}
\title{\em Hyperbolic Immersions of Free Groups}
\author{Jean Pierre Mutanguha}
\address{Department of Mathematical Sciences \\
  University of Arkansas\\
  Fayetteville, AR
  \newline \indent  \it Web address: \tt {\url{https://mutanguha.com}}}
\email{\href{mailto:jpmutang@uark.edu}{jpmutang@uark.edu}}
\date{} 
\begin{document}

\begin{abstract}We prove that the mapping torus of a graph immersion has a word-hyperbolic fundamental group if and only if the corresponding endomorphism does not produce Baumslag-Solitar subgroups. Due to a result by Reynolds, this theorem applies to all injective endomorphisms of $F_2$ and nonsurjective fully irreducible endomorphisms of $F_n$. We also give a framework for extending the theorem to all injective endomorphisms of~$F_n$.
\end{abstract}
\maketitle

\section{Introduction}



 Thurston \cite{ThII} proved that the interior of a {\it mapping torus} $M_f$ of a hyperbolic surface homeomorphism $f:S \to S$ has a finite volume hyperbolic structure if and only if the homeomorphism is isotopic to a {\it pseudo-Anosov} homeomorphism, which by Nielsen-Thurston Classification is equivalent to saying $f$ has no periodic homotopy classes of essential simple closed curves. Assuming $S$ is closed, Thurston's result states, in particular, that $f$ is isotopic to a pseudo-Anosov homeomorphism if and only if $\pi_1(M_f)$ is {\it word-hyperbolic}, i.e., its Cayley complex satisfies a linear isoperimetric inequality \cite{ABC, Gro}. 

In the spirit of Thurston's result, Bestvina-Feighn proved that when $G$ is word-hyperbolic and $\phi:G \to G$ is a {\it hyperbolic} automorphism, then $G \rtimes_\phi \mathbb Z$ is word-hyperbolic. Hyperbolic automorphisms are defined in Section \ref{hyp}.
Peter Brinkmann later proved that {\it atoroidal} automorphisms of free groups, i.e., automorphisms with no nontrivial periodic conjugacy classes, are hyperbolic. Together, these theorems give:

\begin{thm*}[Bestvina-Feighn~\cite{BF92}, Brinkmann~\cite{Bri}] Let $\phi:F \to F$ be an automorphism of a free group of finite rank. The following are equivalent:
\begin{enumerate}
\item $\phi$ is hyperbolic.
\item $F \rtimes_\phi \mathbb Z$ is word-hyperbolic.
\item $F \rtimes_\phi \mathbb Z$ has no $\mathbb Z^2$ subgroup.
\item $\phi$ is atoroidal.
\end{enumerate}\end{thm*}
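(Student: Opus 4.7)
The plan is to close the cycle of implications $(1) \Rightarrow (2) \Rightarrow (3) \Rightarrow (4) \Rightarrow (1)$, invoking the Bestvina-Feighn combination theorem for the first step and Brinkmann's theorem for the last step, with the middle two being elementary.

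For $(1) \Rightarrow (2)$, the free group $F$ is itself word-hyperbolic, so by the Bestvina-Feighn combination theorem, the HNN extension $F \rtimes_\phi \mathbb{Z}$ associated with a hyperbolic automorphism $\phi$ is word-hyperbolic. For $(2) \Rightarrow (3)$, I would simply appeal to the standard fact that word-hyperbolic groups contain no Baumslag-Solitar subgroups; in particular, no copy of $\mathbb{Z}^2$, as two commuting elements of infinite order would force flat quadrilaterals inconsistent with thin-triangle geometry.

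For $(3) \Rightarrow (4)$, I proceed by contraposition. Suppose $\phi$ is not atoroidal: there exists $w \in F$ with $w \neq 1$, some $k \geq 1$, and some $u \in F$ such that $\phi^k(w) = u w u^{-1}$. In $F \rtimes_\phi \mathbb{Z}$, with stable letter $t$ acting by $t x t^{-1} = \phi(x)$, set $s = t^k u^{-1}$. Then a direct check gives $s w s^{-1} = t^k u^{-1} w u t^{-k} = t^k \phi^{-k}(\phi^k(w)) t^{-k}\cdot(\text{after canceling})$, or more cleanly, $s^{-1} w s = u t^{-k} w t^k u^{-1} = u \phi^{-k}(w) u^{-1}$, which can be adjusted so that $w$ and $s$ commute. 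Since $w$ is a nontrivial element of a free group it has infinite order, and $s$ maps nontrivially to the $\mathbb{Z}$ quotient so also has infinite order; together they generate a $\mathbb{Z}^2$ subgroup.

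Finally, $(4) \Rightarrow (1)$ is Brinkmann's theorem, which is the main obstacle and the most technically involved implication. I would import it as a black box: the proof uses relative train track representatives of $\phi$ to show that if no nontrivial conjugacy class is periodic, then word length of conjugacy classes grows exponentially under forward or backward iteration, which upgrades to the full hyperbolicity condition on elements required by Bestvina-Feighn. The combination theorem on one side and the train track machinery on the other are the substantive inputs; the other two implications serve only to close the loop.
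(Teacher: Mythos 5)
Your proposal is correct and takes essentially the same route as the paper, which states this theorem without proof and attributes exactly the two substantive implications you identify — $(1)\Rightarrow(2)$ to the Bestvina--Feighn combination theorem and $(4)\Rightarrow(1)$ to Brinkmann — with $(2)\Rightarrow(3)$ and $(3)\Rightarrow(4)$ being the standard observations you supply. One small correction: with the convention $txt^{-1}=\phi(x)$ and $\phi^k(w)=uwu^{-1}$, the element $s=t^ku^{-1}$ does not commute with $w$; you want $s=u^{-1}t^k$, for which $sws^{-1}=u^{-1}\phi^k(w)u=w$, as you implicitly concede when you say the element ``can be adjusted.''
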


One might now ask if a similar statement is true when $\phi$ is injective but not surjective. In this case, we can no longer form a semi-direct product $F \rtimes \mathbb Z$ but the same presentation defines an {\it ascending HNN-extension} denoted by $F*_\phi$ (Section~\ref{defs}). Unlike automorphisms, an iterate of $\phi$ may now map a nontrivial element to a conjugate of some proper power.

\begin{eg}\label{a0} Let $F_1 \cong \mathbb Z$ be generated by $c$ and let $d$ be any positive integer, then we can define an endomorphism $\psi:F_1 \to F_1$ by $\psi(c) = c^d$. We denote $\mathbb Z*_{\psi}$ by $BS(1,d)$ and these are the {\bf metabelian Baumslag-Solitar (BS)} groups. For the rest of this paper, we will explore $BS(1,2)$.\end{eg}

Baumslag-Solitar groups are obstructions to word-hyperbolicity and our main theorem states that, under certain hypothesis, these are the only obstructions. 

\begin{restate}{Theorem}{mainthm} Suppose $f:\Gamma \to \Gamma$ is an immersion of a finite graph. The following are equivalent:
 \begin{enumerate} 
 \item $\pi_1(M_f)$ is word-hyperbolic.
 \item $\pi_1(M_f)$ contains no $BS(1,d)$ subgroups for $d \ge 1$.
 \item There are no $k, d \ge 1$ and nontrivial loop $\sigma$ in $\Gamma$ such that $f^k(\sigma)$ is freely homotopic to $\sigma^d$.
 \end{enumerate} 
 \end{restate}



While proving this theorem, we will derive one more equivalent condition that we currently omit until relevant definitions are given in Section \ref{pulls}. 
Our theorem generalizes theorems by Ilya Kapovich \cite[Theorem 5.5]{Kap} and Fran\c{c}ois Gautero \cite[Theorem~13.2]{Gau}: Kapovich assumed $\phi(F)$ was an {\it immersed subgroup} (equivalently, $f$ is an immersion on the rose) and the proof was algebraic; Gautero assumed $\phi$ was {\it hyperbolic} with malnormal image and their argument was topological. We give a topological proof along the lines of Kapovich which allowed us to generalize both results. The extra condition mentioned before also gives us an algorithm that determines whether Condition (3) holds for a given immersion (Remark \ref{algo}, Corollary~\ref{algo2}). Even though the hypothesis of Theorem \ref{mainthm} is restrictive, using a theorem of Patrick Reynolds~\cite{Rey}, we show that it applies to all {\it fully irreducible} endomorphisms of $F_n$ and all injective endomorphisms of $F_2$:
 \begin{restate}{Corollary}{cor2} Let $\phi:F_n \to F_n$ be a fully irreducible endomorphism. The following are equivalent:
\begin{enumerate}
\item $F_n*_\phi$ is word-hyperbolic. 
\item $F_n*_\phi$ has no $BS(1,d)$ subgroups for $d \ge 1$.
\item There are no $k, d \ge 1$ and $1 \ne g \in F_n$ such that $\phi^k(g)$ is conjugate to $g^d$ in $F_n$.
\end{enumerate}\end{restate}
 \begin{restate}{Corollary}{cor3} Let $\phi:F_2\to F_2$ be an injective endomorphism. The following are equivalent:
\begin{enumerate}
\item $F_2*_\phi$ is word-hyperbolic. 
\item $F_2*_\phi$ has no $BS(1,d)$ subgroups for $d \ge 1$.
\item There are no $k, d \ge 1$ and $1 \ne g \in F_2$ such that $\phi^k(g)$ is conjugate to $g^d$ in $F_n$.
\end{enumerate}\end{restate}

\begin{eg}[Sapir Group]\label{b0} Let $F_2$ be a free group generated by $a,b$ and $\varphi:F_2 \to F_2$ be given by $\varphi(a) = ab$ and $\varphi(b) = ba$. The {\bf Sapir Group} is the asc.~HNN-ext.~$F_2*_{\varphi}$. Since $\varphi(F_2)$ is an immersed subgroup, one can use Kapovich's result to prove the group is word-hyperbolic. The only proof we have found is due to J. O. Button \cite[Theorem 4.1]{But} who used the action on cyclically reduced words to directly show that no iterate of $\varphi$ maps a nontrivial element to a conjugate of its power. We will provide a topological proof of this fact. (Example \ref{b3}) \end{eg}

The two examples given in this section, $\psi$ and $\varphi$, will be used throughout the paper to illustrate the various ideas involved and three new examples will be given in Section~\ref{irred}. The reader is encouraged to choose a random nonsurjective injective endomorphism of $F_3$ to work with as another example. If the endomorphism is not induced by any graph immersion, this would be a useful example to have when generalizing Corollary~\ref{cor3} to $F_n$.

{~}

We now briefly sketch the proof of Theorem \ref{mainthm}. The first implication, $(1){\implies}(2)$, is the fact that BS subgroups are obstructions to word-hyperbolicity: word-hyperbolic groups have virtually cyclic centralizers and their cyclic subgroups are quasi-isometrically embedded \cite{ABC}. Kapovich proved $(2){\implies}(3)$ \cite[Lemma 2.3]{Kap}.  We prove $(3){\implies}(1)$ using the Bestvina-Feighn combination theorem. Briefly, the combination theorem states that if all annuli $\alpha: S^1 \times I \to M_f$ (with some technical conditions) have {\it uniform exponential growth}, then $\pi_1(M_f)$ is word-hyperbolic. It remains to show that all annuli do have uniform exponential growth. 

Informally, the growth of an annulus $g(\alpha)$ is the ratio of the larger end of the annulus to the center of the annulus and the length $l(\alpha)$ is the distance between the ends. The annuli have uniform exponential growth if there is $\lambda>1$ such that $\lambda^{l(\alpha)} \le g(\alpha)$ for long enough annuli.

Lift $\alpha$ to the natural infinite cyclic cover of $M_f$ to get $\tilde \alpha:S^1 \times I \to \tilde M_\mathbb{Z}$. This cover has a preferred direction in which things grow exponentially due to the atoroidal and immersion assumption on $f$. In particular, if $\tilde \alpha$ is monotone with respect to this direction, then it will have uniform exponential growth along this direction. So we need to show that those that fail to be monotone have uniform exponential growth as well. The definition of annuli forces such $\tilde \alpha$ to consist of two segments: one increasing, one decreasing. 

Proposition \ref{main2} uses the lack of $f$-invariant loops and immersion assumption on $f$ to show that, for all non-monotone $\tilde \alpha$, we can assume that the decreasing segments have uniformly bounded lengths. So long enough annuli will  have negligible decreasing segments and behave like monotone annuli, i.e., they have uniform exponential growth.  This concludes the sketch proof. Note that when $\phi$ is an automorphism, all annuli will be monotone but the uniform exponential growth is difficult to establish since $f$ is not an immersion. The second case where an annulus fails to be monotone is a new phenomenon unique to nonsurjective endomorphisms.

{~}

To generalize Theorem \ref{mainthm} to all $\pi_1$-injective graph maps, it would be helpful to have an analogue for hyperbolic automorphisms. In Section \ref{hyp}, we define {\it hyperbolic} graph maps and give a sufficient condition for when the fundamental groups of their mapping tori are word-hyperbolic. This reduces the general problem to showing an analogue of Brinkmann's theorem and generalizing Proposition~\ref{main}.

{~}

\noindent \textbf{Overview of the paper:} For the most part, we follow the structure of Kapovich's paper \cite{Kap}. In Section~\ref{defs}, we set the assumptions, definitions, and notations that will be used throughout the paper. In Section~\ref{pulls}, we introduce the pullback of a graph immersion and we prove a pullback stabilizing proposition (Proposition~\ref{main}). 
In Section~\ref{comb}, annuli and other relevant definitions are given and the combination theorem is stated. Section~\ref{anns} is the crucial bridge between the previous two sections as we interpret Proposition~\ref{main} in terms of annuli (Proposition~\ref{main2}). In Section~\ref{hyp}, we use this annuli interpretation and the combination theorem to prove word-hyperbolicity. In the process, we define hyperbolic graph maps and mention how this may help extend the main theorem from immersions to all $\pi_1$-injective graph maps. Section~\ref{irred} contains the application of the main theorem to fully irreducible endomorphisms of $F_n$ and all injective endomorphisms of $F_2$.

{~}

\noindent \textbf{Acknowledgments:} I would like to thank my advisor Matt Clay for the discussions that led me to this question and result. I am also grateful for the comments from Ilya Kapovich and their suggestion of Corollary \ref{cor3}, and Derrick Wigglesworth for their suggestion of Definition \ref{defhyp}. Finally, I thank the referee for suggestions and comments that helped me streamline the exposition.

\section{Definitions and Notations}\label{defs}

We define finite graphs to be finite $1$-dimensional CW-complexes. The $0$-cells are called the vertices and the $1$-cells are edges. A {\bf core graph} is a finite graph with no valence-$1$ vertices. In this section and the remainder of the paper, $\Gamma$ is a connected core graph whose vertices have valency~$\ge 3$. 

A (topological) map $f:\Gamma \to \Gamma$ is an {\bf immersion} if it is a locally injective map. The valency restriction on $\Gamma$ is only included as it implies that immersions map vertices to vertices. The restriction can be removed if, additionally, immersions are assumed to map vertices to vertices. The {\bf mapping torus of $f$}, $M_f$, is defined to be \[ \left( \Gamma \times \left[-\sfrac{1}{2}, \sfrac{1}{2}\right] \right)/{\sim}  \text{ with the relation } (x,\sfrac{1}{2}) \sim (f(x),-\sfrac{1}{2}) ~ \forall x \in \Gamma \] 

We also set, for this section and the remainder of the paper, $S^1 = \mathbb R / \mathbb Z$ and, for any interval $I \subset \mathbb R$, $I_{\mathbb Z} = I \cap \mathbb Z$. The {\bf edge space} is the integer cross-section $\Gamma \times \{ 0 \}$ while the {\bf vertex space} is the complement in $M_f$ of the edge space. The interval used in our definition for $M_f$ is not standard but it has been chosen so that the edge space, which is the space we will be most interested in, lies in the integer cross-section of $M_f$. This was a purely aesthetic choice.

An immersion $f:\Gamma \to \Gamma$ induces an injective  (outer) endomorphism $\phi = f_*$ of $F = \pi_1(\Gamma)$, well-defined up to post-composition with an inner automorphism. By Van Kampen's theorem:
\[ \pi_1(M_f) \cong \langle ~ F, t ~|~ t^{-1}xt = \phi(x), \forall x\in F ~\rangle \]
When $\phi$ is an automorphism, the latter is the presentation of the semi-direct product $F \rtimes_\phi \mathbb Z$. For an injective (not necessarily surjective) endomorphism $\phi$, it is the presentation of its {\bf ascending HNN-extension} and denoted by $F*_{\phi}$.

A map $f:\Gamma \to \Gamma$ has {\bf an invariant loop} if there exists positive integers $k,d$, and a nontrivial loop $\sigma$ in $\Gamma$ such that \( f^k(\sigma) \simeq \sigma^d\), i.e., $f^k(\sigma)$ is freely homotopic to $\sigma^d$. Equivalently, for the induced endomorphism $\phi = f_*$, there are $k,d \ge 1$ and a nontrivial $g \in F$ such that $[\phi^k(g)] = [g^d]$, i.e., $\phi^k(g) = x g^d x^{-1}$ for some $x \in F$. We shall refer to $d$ as the {\bf degree of the invariant loop}.

 
\begin{eg}[Continuing Example \ref{a0}]\label{a1}Let $G = S^1$. We can induce $\psi$ with the map $g:G \to G$ given by $x \mapsto 2x$. This is an immersion on the circle. As mentioned earlier, $\pi_1(M_{g}) \cong BS(1,2)$ is not word-hyperbolic. Clearly, the graph $G$ is a $g$-invariant loop with degree $d=2$.\end{eg}

\begin{eg}[Continuing Example \ref{b0}]\label{b1}Let $H$ be two copies of $S^1$ with their basepoints $0\in S^1$ identified and label the copies $a$ and $b$ respectively. Then we can induce $\varphi$ with an immersion $h: H \to H$ that maps $a$ onto the path $a b$ and $b$ onto $b a$. We shall eventually show that $\pi_1(M_h)$ is word-hyperbolic and $h$ has no invariant loop.\end{eg}

\section{Pullbacks}\label{pulls}

\begin{defn} The {\bf pullback} or {\bf fibered product} of graph immersions $g:A \to G$ and $h:B \to G$ is the topological space:
\[ A \times_G B = \left\{ ~ (x,y) \in A \times B ~ : ~ g(x) = h(y) ~\right\} \]
For $i \ge 1$, let $\Gamma_i$ be the pullback of $f^i$ and $f^i$. Set $\Gamma_0 = \left\{ (x,y) \in \Gamma \times \Gamma~:~x=y \right\}$.
\end{defn}

It follows from the local injectivity of an immersion $f:\Gamma \to \Gamma$ that each $\Gamma_i$ is a finite graph and $\Gamma \cong \Gamma_0 \subset \Gamma_1 \subset \Gamma_2  \subset \cdots \subset \Gamma \times \Gamma$.

\begin{defn}For any graph $X$, a {\bf direction at a point} $x \in X$ is an end of $X - x$. We abuse notation and denote the set of directions at $x$ by $T_xX$. Given a topological map $q:X \to X$ that preserves vertices and is locally injective on interior of edges, the {\bf derivative map} of $q$ at $x$ is the induced map $dq_x:T_xX \to T_{q(x)}X$.\end{defn}

Note that the valency at $x$ is the number of directions at $x$ and all derivative maps are injective if and only if the graph map is an immersion. The definition is given here rather than the previous section since it will only be used in the following lemma and Lemma~\ref{explem}.

\begin{lemma}\label{lem0} Let $f:\Gamma \to \Gamma$ be an immersion. Then for all $i \ge 0$, $\Gamma_i$ is a union of some components of $\Gamma_{i+1}$.\end{lemma}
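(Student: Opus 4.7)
The plan is to show that $\Gamma_i$ is clopen in $\Gamma_{i+1}$; a clopen subset of a finite graph is automatically a union of connected components. The containment $\Gamma_i \subset \Gamma_{i+1}$ is essentially by definition: applying $f$ to the equation $f^i(x)=f^i(y)$ yields $f^{i+1}(x)=f^{i+1}(y)$, as already remarked in the text.

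Closedness is formal, since $\Gamma_i$ is the preimage of the (closed) diagonal under the continuous map $f^i \times f^i \colon \Gamma \times \Gamma \to \Gamma \times \Gamma$. So the real substance of the lemma lies in openness, and this is the only place where the immersion hypothesis enters. Given $(x,y) \in \Gamma_i$, I would set $z = f^i(x) = f^i(y)$ and use local injectivity of $f$ at $z$ to choose a neighborhood $U \subset \Gamma$ of $z$ on which $f|_U$ is injective (a small open arc if $z$ lies in the interior of an edge, or the open star of $z$ if $z$ is a vertex, using injectivity of $df_z$). Continuity of $f^i$ then supplies neighborhoods $V_x \ni x$ and $V_y \ni y$ in $\Gamma$ with $f^i(V_x) \cup f^i(V_y) \subset U$. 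For any $(x',y') \in (V_x \times V_y) \cap \Gamma_{i+1}$, the chain $f(f^i(x')) = f^{i+1}(x') = f^{i+1}(y') = f(f^i(y'))$ combined with injectivity of $f|_U$ forces $f^i(x') = f^i(y')$, so $(x',y') \in \Gamma_i$. This exhibits an open neighborhood of $(x,y)$ in $\Gamma_{i+1}$ sitting inside $\Gamma_i$.

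The main obstacle I anticipate is just deploying the "local injectivity at a single point" argument carefully: one needs both $f^i(x')$ and $f^i(y')$ to land in a common injective neighborhood $U$ of $z$, which is why the two coordinates are handled independently via continuity of $f^i$. Once openness is established, combining it with closedness produces the lemma.
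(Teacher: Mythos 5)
Your proof is correct. It rests on exactly the same pivot as the paper's argument --- local injectivity of $f$ at the common image point $z=f^i(x)=f^i(y)$ is what prevents a pair $(x',y')$ from entering $\Gamma_{i+1}$ without already lying in $\Gamma_i$ --- but you package it as a point-set statement (``$\Gamma_i$ is clopen in $\Gamma_{i+1}$'') where the paper packages it combinatorially: the paper fixes components $C\subset C'$ of $\Gamma_i\subset\Gamma_{i+1}$, compares the valency of each point $p=(p_1,p_2)$ in $C$ and in $C'$ via the direction sets $T_{p_1}\Gamma$, $T_{p_2}\Gamma$, and derives a contradiction with injectivity of $df_q$ at $q=f^i(p_1)$ if an extra pair of directions $(b,\beta)$ were identified only after one more application of $f$. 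The two arguments are really the same local statement in two languages: ``same valency in $C$ and $C'$ at every point'' is precisely openness of $C$ in $C'$, and the paper's final step (``thus $C=C'$'') silently uses connectedness of $C'$ together with closedness of $C$, which you make explicit via the preimage-of-the-diagonal observation. What your version buys is independence from the CW/valency bookkeeping --- it works verbatim for any locally injective map of Hausdorff spaces and handles interior-of-edge points and vertices uniformly; what the paper's version buys is that the direction-set formalism is set up anyway for Lemma~\ref{explem}, so the valency count comes for free and makes the finite combinatorics (which feeds into Remark~\ref{algo}) more visible. Either way the lemma stands.
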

\begin{proof} Fix some $i \ge 0$ and let $C$ and $C'$ be components of $\Gamma_i$ and $\Gamma_{i+1}$ resp. such that $C \subset C'$. We want to show that $C = C'$. 

For any $p \in C$, let $v_p = C$-valency of $p$, i.e., the number of ends of $C - p$, and similarly define $v'_p = C'$-valency of $p$. Recall $p$ corresponds to two points $p_1, p_2 \in \Gamma$ such that $f^i(p_1) = f^i(p_2)$ and let $q=f^i(p_1)$. The $C$-valency of $p$ corresponds to a maximal list of pairs of directions $(a_1, \alpha_1), \ldots, (a_{v_p},\alpha_{v_p})$ based at $p_1$ and $p_2$ resp. such that $df^i_{p_1}(a_j)=df^i_{p_2}(\alpha_j)$. Suppose $v_p < v'_p$, then that means there is at least one extra pair of directions $(b, \beta)$ such that $df^i_{p_1}(b) \neq df^i_{p_2}(\beta)$ but $df^{i+1}_{p_1}(b) = df^{i+1}_{p_2}(\beta)$. But this means that $q = f^i(p_1) = f^i(p_2)$ has two distinct directions $df^i_{p_1}(b), df^i_{p_2}(\beta)$ that map to the same direction under the derivative map $df_q$. This contradicts the fact $f$ is an immersion. Therefore, $v_p = v'_p$ and thus $C = C'$ as all points of $C$ have the same valency in both $C$ and $C'$.
\end{proof}

For this section and the remaining sections, set $\hat \Gamma_{i}$ to be the maximal core subgraph of $\Gamma_{i}-\Gamma_{i-1}$. There is a natural immersion $\hat f: \Gamma_{i} \to \Gamma_{i-1}$ given by $\hat f(x,y) = (f(x), f(y))$ which restricts to an immersion $\hat \Gamma_i \to \hat \Gamma_{i-1}$. 

\begin{lemma}\label{lem1} Let $f:\Gamma \to \Gamma$ be an immersion. If $\hat \Gamma_i$ is empty, then so are all $\hat \Gamma_j$ for $j > i$. If $\hat \Gamma_i$ consists of loops and $\hat \Gamma_{i+1}$ is nonempty, then $\hat \Gamma_{i+1}$ consists of loops too.\end{lemma}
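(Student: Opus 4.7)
The plan is to use the restricted immersion $\hat f \colon \hat\Gamma_{j+1} \to \hat\Gamma_j$ supplied by the paragraph immediately preceding the lemma. Both claims are one-step propagations of structure from $\hat\Gamma_j$ to $\hat\Gamma_{j+1}$; after that, one iterates.

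For the first claim, I would observe that an immersion is in particular a map, and no nonempty space admits a map into the empty space. Hence $\hat\Gamma_i = \emptyset$ forces $\hat\Gamma_{i+1} = \emptyset$, and a trivial induction on $j$ gives $\hat\Gamma_j = \emptyset$ for every $j > i$.

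For the second claim, the key observation is that every point of a disjoint union of topological circles has exactly two directions, regardless of whether it is a CW-vertex or an interior point of an edge. Assuming $\hat\Gamma_i$ consists of loops, the immersion $\hat f \colon \hat\Gamma_{i+1} \to \hat\Gamma_i$ has injective derivative at every point (this is the content of the remark following the definition of the derivative map), so $|T_p\hat\Gamma_{i+1}| \le |T_{\hat f(p)}\hat\Gamma_i| = 2$ for every $p \in \hat\Gamma_{i+1}$. Being a (maximal) core subgraph, $\hat\Gamma_{i+1}$ has no point of valence less than two either; combining, every point has valence exactly two, which forces each connected component of $\hat\Gamma_{i+1}$ to be a topological circle, i.e., a loop.

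The only content I would dwell on is the existence of the restricted immersion $\hat f \colon \hat\Gamma_{j+1} \to \hat\Gamma_j$, that is, the assertion that $\hat f$ really does send core-subgraph material to core-subgraph material. This ultimately comes from the fact that an immersion sends essential loops to essential loops, and the paper already asserts it just before the statement. Granting that point, both parts are immediate consequences of local injectivity, so I would not expect a further obstacle.
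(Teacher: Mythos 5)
Your proof is correct and takes essentially the same approach as the paper's: iterate the restricted immersion $\hat f\colon \hat\Gamma_{i+1}\to\hat\Gamma_i$, noting that nothing nonempty maps to the empty graph and that the only core graph immersing into a disjoint union of loops is itself a disjoint union of loops. The paper asserts this last fact without elaboration; your valence count via injectivity of the derivative maps is precisely the justification it leaves implicit.
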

\begin{proof} Since we have the immersion $\hat f: \hat \Gamma_i \to \hat \Gamma_{i-1}$ for all $i \ge 1$, if $\hat \Gamma_i$ is empty, then so are all $\hat \Gamma_j$ for $j > i$. Furthermore, the only core graph that immerses into a loop is a loop itself. So if $\hat \Gamma_i$ consists of disjoint loops and $\hat \Gamma_{i+1}$ is nonempty, then $\hat \Gamma_{i+1}$ consists of disjoint loops. 
\end{proof}

We say that the {\bf pullbacks stabilize} if $\hat \Gamma_i = \emptyset$ for some $i$.

 \begin{eg}[Continuing Example \ref{a1}]\label{a2} For $g:G \to G$, the pullback $G_1$ is two disjoint loops: one is the diagonal $G_0 \cong G$ and the other is $ \hat G_1 = \left\{ (x,y) \in S^1 \times S^1 :  y - x = \sfrac{1}{2} \right\}$. More generally, \[ \hat G_i =\left\{ (x,y) \in S^1 \times S^1 : y - x = \frac{(2j-1)}{2^i}, ~ j\in [1, 2^{i-1}]_\mathbb{Z} \right\} \]
 Topologically, the components for all $G_i$ are loops and the number of components in $\hat G_i$ doubles with each iteration. The picture on the left in Figure~\ref{fig1} shows the first two pullbacks.
 \end{eg}
 
 \begin{eg}[Continuing Example \ref{b1}]\label{b2} For $h:H \to H$, the pullback $H_1$ consists of a copy of $H$ and two extra loops. The second pullback $H_2$ consists of $H_1$ and contractible components. Therefore,  the core subgraph $\hat H_2$ and all subsequent $\hat H_i$ are empty and so the pullbacks stabilize. Contrast this behavior with that of $g$ in previous example. Proposition~\ref{main} below states that pullbacks stabilize if the immersion has no invariant loops. The picture on the right in Figure~\ref{fig1} illustrates $H_1$.
 \end{eg}
 
\begin{figure}[h]
 \centering 
 \includegraphics[scale=0.40]{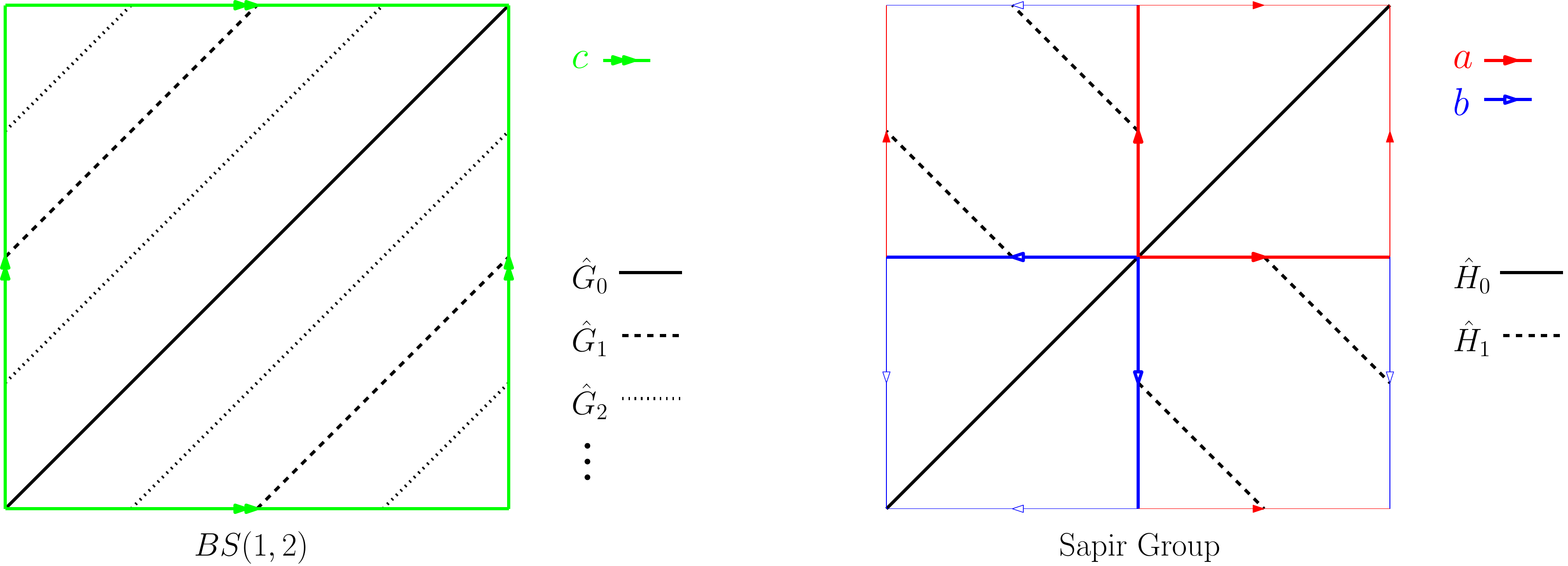}
 \caption{Pullbacks for our main examples.}
 \label{fig1}
\end{figure}



Walter Neumann used pullbacks to generalize a result by Hanna Neumann \cite{HN}:


\begin{thm}[{\cite[Proposition~2.1]{WN}}]\label{shnc} Let $M,N \le F$ be nontrivial finitely generated subgroups. Then 
\[ \sum_{[[g]] \in M \backslash F /N} \max(0, \mathrm{rank}(M \cap gNg^{-1})-1) \le 2 (\mathrm{rank}(M)-1)( \mathrm{rank}(N)-1) \]
where the sum ranges over all $(M,N)$-double cosets $[[g]] = MgN$. \end{thm}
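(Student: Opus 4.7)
The plan is to recast the statement topologically and reduce it to a counting inequality about graph pullbacks, using the construction introduced in Section~\ref{pulls}.

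First, realize $F$ as $\pi_1(R)$ for a finite connected core graph $R$ (say a rose with $\mathrm{rank}(F)$ petals). Stallings' folding procedure represents every finitely generated subgroup of $F$ by an immersion from a finite connected core graph into $R$. This produces finite core graphs $A, B$ and immersions $\alpha : A \to R$, $\beta : B \to R$ whose images in $F$ are conjugate to $M$ and $N$ respectively, with $\mathrm{rank}(\pi_1(A)) = \mathrm{rank}(M)$ and $\mathrm{rank}(\pi_1(B)) = \mathrm{rank}(N)$.

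Second, form the pullback $P = A \times_R B$. The two projections are immersions, so $P$ itself immerses into $R$. A covering-space argument identifies the components of $P$ with the $(M, N)$-double cosets of $F$: for a component $Q$ and any vertex $(a, b) \in Q$, paths from fixed basepoints of $A, B$ to $a, b$ project to a loop in $R$ whose homotopy class determines a well-defined double coset $[[g]]$, and the image of $\pi_1(Q)$ in $F$ is a conjugate of $M \cap gNg^{-1}$. Using $\mathrm{rank}(\pi_1(C)) - 1 = -\chi(C) = -\chi(\mathrm{core}(C))$ for any finite connected graph $C$ with nontrivial $\pi_1$ (tree components contribute $0$ to $\max(0, \mathrm{rank} - 1)$), one obtains
\[
 \sum_{[[g]] \in M \backslash F / N} \max\bigl(0, \mathrm{rank}(M \cap gNg^{-1}) - 1\bigr) \;=\; -\chi(\mathrm{core}(P)).
\]

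Third, bound $-\chi(\mathrm{core}(P))$ combinatorially. Assume $\mathrm{rank}(M), \mathrm{rank}(N) \ge 2$ (otherwise both sides vanish). With $R$ the rose, $V(P) = V(A)\,V(B)$ and $E(P) = \sum_e k_e^A k_e^B$, where $k_e^A, k_e^B$ count preimages in $A, B$ of each edge $e$ of $R$. The folded condition on $A$ forces $k_e^A \le V(A)$ (at each vertex, at most one edge in each orientation lies above $e$), so
\[
 -\chi(\mathrm{core}(P)) \;\le\; -\chi(P) \;\le\; V(A)\,E(B) - V(A)\,V(B) \;=\; V(A)\cdot(-\chi(B)).
\]
After refining $A$ so that all vertices of its core have valence $\ge 3$, the handshake lemma yields $V(A) \le 2(\mathrm{rank}(M) - 1)$, which combined with the above produces the claimed inequality.

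The main obstacle is this last counting step: justifying the reduction to a valence-$\ge 3$ core graph $A$ immersed in some graph with $\pi_1 = F$ (which may force one to enlarge $R$ by subdivision), carefully handling the low-rank boundary cases, and locating precisely where the factor $2$ emerges. Conceptually that factor measures the unavoidable gap between vertex and edge counts for reduced core graphs; removing it altogether is the content of the Strengthened Hanna Neumann Conjecture, proved much later by Mineyev and by Friedman.
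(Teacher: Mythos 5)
The paper does not actually prove this statement: it is quoted from Walter Neumann's paper and used as a black box (only in Lemma~\ref{lem2}), so there is no internal proof to compare against. Your overall strategy --- Stallings graphs, the fibered product $P=A\times_R B$, components versus double cosets, and the identity $\sum_{[[g]]}\max(0,\mathrm{rank}(M\cap gNg^{-1})-1)=-\chi(\mathrm{core}(P))$ --- is the standard and correct route, and is essentially how the cited source argues. The problem is the final counting step, which you yourself flag as the obstacle; as written it contains two genuine errors.

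First, the inequality $-\chi(\mathrm{core}(P))\le-\chi(P)$ is backwards. A tree component $Q$ of $P$ has $-\chi(Q)=-1$, so in fact $-\chi(\mathrm{core}(P))=-\chi(P)+T$ where $T\ge 0$ is the number of tree components (for $M=\langle a\rangle$, $N=\langle b\rangle$ in $F_2$ the pullback is a single isolated vertex, with $-\chi(P)=-1<0=-\chi(\mathrm{core}(P))$). Since $T$ can be arbitrarily large, an upper bound on the global quantity $E(P)-V(P)$ gives no upper bound on $-\chi(\mathrm{core}(P))$, and restricting the count to the core destroys the identity $V(P)=V(A)V(B)$ that your estimate relies on. Second, the reduction to a folded graph $A$ with all valences $\ge 3$ is incompatible with the rest of the count: for $M=\langle a^{100},b\rangle$ the Stallings graph over the rose has $100$ valence-two vertices, and suppressing them turns edges of $A$ into edge-paths in $R$, after which neither $E(P)=\sum_e k_e^A k_e^B$ nor $k_e^A\le V(A)$ applies (subdividing $R$ only makes this worse). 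The standard repair is local rather than global: a core graph has all valences $\ge 2$ and satisfies $-\chi=\tfrac12\sum_v(\mathrm{val}(v)-2)$, with every summand nonnegative; a vertex $(a,b)$ of the pullback of folded graphs has $\mathrm{val}(a,b)\le\min(\mathrm{val}(a),\mathrm{val}(b))$, and $\min(m,n)-2\le(m-2)(n-2)$ for all $m,n\ge2$; summing the right-hand side over \emph{all} pairs $(a,b)\in V(A)\times V(B)$ gives $\tfrac12\cdot 2(\mathrm{rank}(M)-1)\cdot 2(\mathrm{rank}(N)-1)$, which is exactly where the factor $2$ comes from. With that substitution your outline becomes a complete proof.
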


\begin{rmk} The {\it strengthened Hanna Neumann conjecture} is that the $2$ can be dropped and it has been proven (Friedman~\cite{Fried}, Mineyev~\cite{Min}) but, for our purporses, Walter Neumann's bound will suffice. \end{rmk}

\begin{lemma}\label{lem2}Let $f:\Gamma \to \Gamma$ be an immersion and $M=2(\mathrm{rank}(\Gamma)-1)^2$. For $i \ge M$, the graphs $\hat \Gamma_{i}$ are either all disjoint loops or eventually empty. \end{lemma}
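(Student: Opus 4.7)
The plan is to apply Walter Neumann's bound (Theorem~\ref{shnc}) to the total rank of the components of $\Gamma_i$, and then show that a rank-$\ge 2$ component in some $\hat\Gamma_i$ forces rank-$\ge 2$ components in every earlier $\hat\Gamma_j$ as well, so that such components can only appear for boundedly many indices.

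First, I would set up the standard dictionary between pullbacks and intersections: the components of the fibered product $\Gamma \times_\Gamma \Gamma$ defined by $f^i$ on each factor are in bijection with double cosets $\phi^i(F) \backslash F / \phi^i(F)$, and the fundamental group of the component indexed by $[[g]]$ is $\phi^i(F) \cap g\phi^i(F)g^{-1}$. Because $\phi$ is injective (the immersion $f$ induces a $\pi_1$-injection), $\phi^i(F)$ is free of rank $r = \mathrm{rank}(F)$, so Theorem~\ref{shnc} yields
\[
\sum_{C \text{ component of } \Gamma_i}\max\!\bigl(0,\mathrm{rank}(C)-1\bigr)\;\le\;2(r-1)^2 \;=\; M.
\]
The diagonal component $\Gamma_0 \cong \Gamma$ alone contributes $r-1$, so the remaining components, namely the ones coming from $\hat\Gamma_1,\ldots,\hat\Gamma_i$ (each of which is a full component of the ambient $\Gamma_i$ by Lemma~\ref{lem0}), contribute at most $M-(r-1)$ in total.

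Second, I would establish the downward propagation of high rank. The immersion $\hat f:\hat\Gamma_j\to\hat\Gamma_{j-1}$ sends each component $C$ of $\hat\Gamma_j$ into some component $C'$ of $\hat\Gamma_{j-1}$; since immersions are $\pi_1$-injective, $\mathrm{rank}(C')\ge\mathrm{rank}(C)$. Consequently, if $\hat\Gamma_i$ has a component of rank $\ge 2$, then so does $\hat\Gamma_{j}$ for every $1\le j\le i$. Each such level therefore contributes at least $1$ to the sum bounded above, giving
\[
i\;\le\;M-(r-1)\;<\;M.
\]
Contrapositively, once $i\ge M$, the subgraph $\hat\Gamma_i$ has no component of rank $\ge 2$, so each of its components is either a tree (excluded by maximality of the core) or a loop. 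Invoking Lemma~\ref{lem1} then shows that for $i\ge M$ the $\hat\Gamma_i$ either all consist of disjoint loops or are eventually empty, which is the claim.

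The main obstacle I expect is the first step: carefully verifying the correspondence between components of the pullback $\Gamma_i$ and $(\phi^i(F),\phi^i(F))$-double cosets, with fundamental group equal to the corresponding conjugate intersection, in order to legitimately apply Theorem~\ref{shnc}. The downward propagation step is more routine, but it does require checking that $\hat f$ really restricts from $\Gamma_i-\Gamma_{i-1}$ into $\Gamma_{i-1}-\Gamma_{i-2}$, and that the restriction to core subgraphs lands in the core subgraph (so that valence-$1$ trimming does not destroy the image of a rank-$\ge 2$ component); this is exactly the content already asserted in the text just before Lemma~\ref{lem1}.
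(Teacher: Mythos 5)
Your argument is essentially the paper's: both rest on the dictionary between components of $\Gamma_i$ and $\phi^i(F)$-double cosets together with Theorem~\ref{shnc} to bound $\chi^-(\Gamma_i)=(\mathrm{rank}(\Gamma)-1)+\sum_{j\le i}\chi^-(\hat\Gamma_j)\le M$, and both exploit the immersion $\hat f:\hat\Gamma_j\to\hat\Gamma_{j-1}$ to propagate rank information between consecutive levels --- the paper runs this forward (via Lemma~\ref{lem1} and the stabilization of the nondecreasing sequence $\chi^-(\Gamma_i)$), while you run the contrapositive, propagating rank $\ge 2$ downward and counting the levels that each contribute at least $1$. The one inaccuracy is the claim that $\pi_1$-injectivity of $\hat f$ gives $\mathrm{rank}(C')\ge\mathrm{rank}(C)$: rank is not monotone under injections of free groups (e.g.\ $F_3$ embeds in $F_2$), but the consequence you actually use --- that a component whose fundamental group contains a rank-$\ge 2$ subgroup itself has rank $\ge 2$ --- is correct, since free groups of rank $\le 1$ are abelian. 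With that wording repaired, the proof goes through.
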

\begin{proof}
Let $\phi = f_*:F \to F$ be the induced endomorphisms. The components of $\Gamma_{i}$ correspond to the intersections $\phi^i(F) \cap g\phi^i(F)g^{-1}$ as $[[g]]$ ranges over $\phi^i(F)$-double cosets. Hence, we have the following inequality for all $i$: \[ \chi^-(\Gamma_i) = \sum_{[[g]]} \max(0, \mathrm{rank}(\phi^i(F) \cap g\phi^i(F)g^{-1})-1)  \le 2(\mathrm{rank}(\phi^i(F))-1)^2 = M \]
where $\chi^-$ is the negative Euler characteristic of the maximal core subgraph and the sum ranges over all $\phi^i(F)$-double cosets $[[g]]$.
The chain $\Gamma_0 \subset \Gamma_1 \subset \cdots$ and Lemma~\ref{lem1} imply that the nondecreasing sequence $(\chi^-(\Gamma_i))_{i=0}^\infty$ becomes constant once two consecutive terms are equal. Since the sequence is bounded by $M$, the subsequence $(\chi^-(\Gamma_i))_{i=M}^\infty$ must be constant and, by Lemma \ref{lem1} again, the graphs $\hat \Gamma_{i}$ are disjoint loops for all $i \ge M$ or are eventually empty. 
\end{proof}

\begin{eg} For the immersion $g$ in Example \ref{a2}, we get $M_1 = 2(1-1)^2 = 0$. So we conclude from the bound that the graphs $\hat G_i ~ (i \ge 0)$ are either all disjoint loops or eventually empty. As we verified earlier, it is the former case. 

In Example \ref{b2}, we have $M_2 = 2(2-1)^2 = 2$. So $\hat H_i ~ (i \ge 2)$ are either all disjoint loops or eventually empty.  We verified earlier that it is the latter case.\end{eg}

\begin{prop}\label{main} Suppose $f:\Gamma \to \Gamma$ is an immersion and $\hat \Gamma_i$ is nonempty for all $i$. Then $f$ has an invariant loop with degree $d \ge 2$.\end{prop}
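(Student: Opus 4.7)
\emph{Plan.} I argue by contrapositive: assuming $\hat\Gamma_i\ne\emptyset$ for all $i$, I produce an invariant loop of degree $d\ge 2$. By Lemma~\ref{lem2}, for $i\ge M$ each $\hat\Gamma_i$ is a nonempty finite disjoint union of circles, and each circle in $\hat\Gamma_{i+1}$ has a unique parent under $\hat f$ in $\hat\Gamma_i$; this gives a finite-rooted, locally finite forest with nodes at every level, so K\"onig's lemma yields an infinite ascending chain $L_M, L_{M+1}, \ldots$ with $\hat f(L_{i+1})=L_i$.

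\textbf{Algebraic relations.} The pullback projection $p_1:\Gamma_i\to\Gamma$ is an immersion (standard for pullbacks of immersions), so $p_1|_{L_i}$ is a covering onto a primitive cycle $c_i\subset\Gamma$ with some winding $w_i\ge 1$; let $\tau_i\in F$ denote its primitive conjugacy class. The identity $p_1\hat f = f p_1$ and a degree computation yield
\[ [\phi(\tau_{i+1})]=[\tau_i^{u_{i+1}}],\quad u_{i+1}w_{i+1}=w_ie_i,\quad |c_{i+1}|=u_{i+1}|c_i|, \]
where $e_i=\deg(\hat f|_{L_{i+1}})$. Iterating, $[\phi^{j-i}(\tau_j)]=[\tau_i^{u_{i+1}\cdots u_j}]$; a parallel sequence $(\tau'_i, w'_i, u'_{i+1})$ arises from $p_2$. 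So if ever $\tau_i \sim \tau_j$ with $d := u_{i+1}\cdots u_j \ge 2$ (or the analogous statement for $\tau'$), then $[\phi^{j-i}(\tau_j)]=[\tau_j^d]$ is an invariant loop of degree $\ge 2$, and we are done.

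\textbf{Repetition and a contradiction.} In the bounded-length regime, only finitely many primitive cycles of bounded length live in $\Gamma$, so pigeonhole forces some $\tau_i\sim\tau_j$; under the assumption of no invariant loop of degree $\ge 2$, this repetition must carry $d=1$, i.e., all $u_l=1$ in the range, so the tail of $(\tau_i)$ becomes eventually $\phi$-periodic (as conjugacy classes). The same reasoning applies to $(\tau'_i)$. Using that $L_i\in\Gamma_i\setminus\Gamma_{i-1}$, we have
\[ [\phi^i(\tau_i^{w_i})]=[\phi^i((\tau'_i)^{w'_i})]\quad\text{but}\quad [\phi^{i-1}(\tau_i^{w_i})]\ne[\phi^{i-1}((\tau'_i)^{w'_i})]; \]
if both $(\tau_i)$ and $(\tau'_i)$ are eventually $\phi$-periodic, then choosing $N$ a common period with $N\ge i$ and applying $\phi^{N-i}$ to the equation gives $[\tau_i^{w_i}]=[(\tau'_i)^{w'_i}]$, which would survive $\phi^{i-1}$ and contradict the non-equation. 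Thus at least one cycle-length sequence must be unbounded, forcing some $u_l\ge 2$ along the chain.

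\textbf{Main obstacle.} The most delicate case is extracting a repetition $\tau_i\sim\tau_j$ with $d\ge 2$ when the cycle lengths grow without bound, since direct pigeonhole on primitive conjugacy classes of $F$ fails. The required refinement should pigeonhole on a coarser but still finite invariant---for instance, comparing elements modulo sufficiently deep iterates $\phi^l(F)$, or using Theorem~\ref{shnc} iteratively to bound the supply of genuinely new cyclic intersections---so that a matching pair of levels necessarily has intervening product $\prod u\ge 2$. This final pigeonhole is where the main technical work of the proof is concentrated.
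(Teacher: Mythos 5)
Your argument has two genuine gaps, and the second one is exactly where the content of the proposition lives. First, the ``contradiction'' step is not valid as written: the statement that $L_i$ lies in $\Gamma_i$ but not in $\Gamma_{i-1}$ says that the two projected loops $\sigma_{\pm}$ satisfy $f^i\circ\sigma_-=f^i\circ\sigma_+$ \emph{pointwise} while $f^{i-1}\circ\sigma_-$ and $f^{i-1}\circ\sigma_+$ disagree pointwise; it does \emph{not} say that $[\phi^{i-1}(\tau_i^{w_i})]\ne[\phi^{i-1}((\tau_i')^{w_i'})]$ as conjugacy classes. Two loops can be freely homotopic without coinciding as maps, so eventual $\phi$-periodicity of the classes $[\tau_i],[\tau_i']$ produces no contradiction with $L_i\not\subset\Gamma_{i-1}$. (The paper handles this pointwise issue directly: on the subgraph $Z$ of non-expanding edges the $f$-orbits of edges are eventually periodic, so for large $k$, $f^k(x)=f^k(y)$ with $x,y\in Z$ forces $f^{k-1}(x)=f^{k-1}(y)$; hence every loop of $\hat\Gamma_k$ must meet an expanding edge. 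That is the correct substitute for your conjugacy-class argument.) Two smaller inaccuracies: $p_1|_{L_i}$ is an immersed circle but not in general a covering onto an embedded primitive cycle, and $|c_{i+1}|=u_{i+1}|c_i|$ fails when edges map to longer edge-paths; only the conjugacy-class identities survive.

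Second, and more seriously, your ``Main obstacle'' paragraph concedes the case that actually has to be proved: when the lengths of the $\tau_i$ are unbounded there is no finite set to pigeonhole on at the level where you are working, and you offer only a guess at a fix. The paper's resolution is to push everything down to the fixed level $M$ rather than chase the chain upward: the loops $\gamma_k\subset\hat\Gamma_{m_k}$ are mapped by $\hat f^{\,m_k-M}$ into a single component $C=(c_-,c_+)$ of $\hat\Gamma_M$, one takes the minimal root $c$ of $c_+$ and the finite number $N$ of subloops of $c$, uses an expanding edge $e$ shared by the $\sigma_k$ to find a segment $s\subset e$ with $f^{m_k-M}(s)=c^{N+1}$, and observes that $f^{m}(c^j)$ is a subloop of a power of $c$ for $1\le j\le N+1$, where $m=m_{k+1}-m_k$. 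Writing $f^m(c^j)\epsilon_j\simeq c^{s_j}$ with $\epsilon_j$ a subloop of $c$ and pigeonholing on the $N$ possible offsets $\epsilon_j$ yields $t<t'$ with $f^m(c)^{t'-t}\simeq c^{s_{t'}-s_t}$, hence $f^m(c)\simeq c^{d}$, and $d\ge2$ because $c$ contains an expanding edge. This finite pigeonhole on subloops of the \emph{fixed} loop $c$ is the missing idea; without it your argument does not close.
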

\begin{proof} Suppose $\hat \Gamma_i$ is nonempty for all $i$. Let $( \gamma_k)_{k=0}^\infty$ and $(m_k)_{k=0}^\infty$ be sequences of immersed loops  and strictly increasing positive integers such that $\gamma_k \subset \hat \Gamma_{m_k}$. Lemma \ref{lem2} gives an explicit $M$ such that, for all $m \ge M$, the components of $\hat \Gamma_m$ are loops. Passing to subsequence if necessary, assume that $m_k > M$ for all $k$ and the sequence of loops $(\hat f^{m_k - M}(\gamma_{k}))$ in $\hat \Gamma_M$ all lie in the same component, $C$; this can be done as $\hat \Gamma_M$ is finite. Therefore, for all $k$, there is a nonzero integer $n_k$ satisfying $\hat f^{m_k - M}(\gamma_k) \simeq C^{n_k}$. The component $C$ is a pair of loops $(c_{-}, c_{+}) \subset \Gamma \times \Gamma$ and, similarly, $\gamma_k$ is a pair of loops $(\sigma_{-k}, \sigma_k)$ so that we have $f^{m_k - M}(\sigma_{\pm k}) \simeq c_{\pm}^{n_k}$. We are using $\pm$ signs to distinguish the left and right factors.

Let $Z \subset \Gamma$ denote the subgraph of non-expanding edges, i.e., edges whose images under $f$-iteration have uniformly bounded combinatorial lengths. Then $f$-orbits of edges of $Z$ are eventually periodic. In particular, for large enough $k$, if $x,y \in Z$ and $f^k(x) = f^k(y)$ then $f^{k-1}(x) = f^{k-1}(y)$. So for large enough $k$, $(x,y) \in \hat\Gamma_k$ implies at least one of $x$ or $y$ is not in $Z$, i.e., at least one of $x$ or $y$ is a point in an expanding edge. By passing to a subsequence if necessary, we can assume without loss of generality that all loops $\sigma_{k}$ share an expanding edge $e \subset \Gamma$. 

Set $c$ to be the minimal root of $c_{+}$ and $N = N(c)$ to be the number of subpaths of $c$ that are also loops (subloops of $c$).

Since $f$ is an immersion, $e$ is an expanding edge of $\sigma_k$ for all $k$, and $f^{m_k-M}(\sigma_{k}) \simeq c_{+}^{n_k}$, we have $f^i(e)$ surjects onto $c$ for large enough $i$. In fact, it will surject onto any power of $c$ for large enough $i$. Fix a large $k$ and a subsegment $s \subset e$ such that $f^{m_k-M}(s) = c^{N+1}$. Let $m=m_{k+1} - m_k$, then $f^{m_{k+1}-M}(s) = f^m(f^{m_k-M}(s)) = f^{m}(c^{N+1})$ is a subloop of $f^{m_{k+1}-M}(\sigma_{k+1}) \simeq c^{n_{k+1}}$. In fact, for all $1 \le j \le N+1$, $f^{m}(c^j)$ is a subloop of $c^{n_{k+1}}$. Thus, there is a subloop $\epsilon_j \subset c$ and a positive integer $s_j$ such that
\[ f^{m}(c^j) \epsilon_j \simeq c^{s_j} \]
By the choice of $N$, there is a pair $1 \le t < t' \le N+1$ such that $\epsilon_t = \epsilon_{t'} = \epsilon$. Therefore,
\[\begin{aligned} f^{m}(c^t) \epsilon &\simeq c^{s_t}, & f^{m}(c^{t'}) \epsilon &\simeq c^{s_{t'}} \end{aligned}\]
and \[ f^m(c)^{t'-t} = f^{m}(c^{t'-t}) \simeq c^{s_{t'}-s_t} \]
We also have the following inequalities: $m = m_{k+1}-m_k >0$, $t'-t>0$, and $ s_{t'} - s_t >0$. As $c$ is a minimal root, the last equation implies $f^{m}(c) \simeq c^d$ for some $d \ge 1$; in fact, $d \ge 2$ since $e \subset c$ is expanding.
\end{proof}

\begin{rmk}\label{algo} Using the pigeonhole principle, there is a recipe for a number $L(f)$ such that if the original sequence $(m_k)$ had $L(f)$ entries instead of being infinite, then the necessary subsequences can still be taken producing a nonempty subsequence. In particular,  \( \hat \Gamma_{L(f)} \neq \emptyset \text{ implies } f \text{ has invariant loop with degree } d\ge 2. \) This recipe uses the following computable numbers: $M = 2(\mathrm{rank}(\Gamma)-1)^2$; the number of components of $\hat \Gamma_M$; the number of edges in $\Gamma$; the maximum $N(c_{-})$ or $N(c_{+})$ as $C = (c_{-}, c_{+})$ ranges over all components of $\hat \Gamma_M$; and the maximum combinatorial length of $c_{-}$ or $c_{+}$ as $C$ ranges over all components of $\hat \Gamma_M$. \end{rmk}

\section{The Combination Theorem}\label{comb}

The Bestvina-Feighn combination theorem gives us a sufficient condition for when mapping tori of free groups, and more generally graphs of $\delta$-hyperbolic spaces, have word-hyperbolic fundamental groups. We need to define some terms before stating it.

\begin{defn}Let $m$ be a positive integer. An {\bf annulus of length $2m$} is a map $\alpha: S^1 \times [-m,m] \to M_f$ satisfying the following conditions:
\begin{enumerate}
\item It is transverse to the edge space.
\item The $\alpha$-preimage of the edge space is $S^1 \times [-m,m]_\mathbb{Z}$.
\item For $i \in  [-m, m]_{\mathbb Z}$, $\alpha|_{S^1 \times \{ i\}}$ is locally injective everywhere with exception possibly at the basepoint $0 \in S^1 \cong S^1 \times \{i\}$.
\item \label{c4} For $i \in   [-m, m{-}1]_{\mathbb Z}$, $\alpha|_{\{ 0 \} \times [i, i+1]}$ is not homotopic rel endpoints into  the edge space
\end{enumerate}
For simplicity, set $\alpha_t = \alpha|_{S^1 \times \{ t \}}{:}S^1 \to M_f$ and $\alpha^* = \alpha|_{\{0\} \times [-m , m]}{:}[-m,m] \to M_f$.
\end{defn}

When $t$ is an integer, we will refer to the based loops $\alpha_t$ as the {\bf rings} of the annulus; the path $\alpha^*$ will be {\bf trace} of the basepoint. Informally, the rings will be used to define the thickness of the annulus while the pieces of the trace in the vertex space will define the distance between consecutive rings of the annulus.

{~}

\noindent The {\bf girth} of $\alpha$ is $l(\alpha_0)$ where $l$ is the (combinatorial) length measured after tightening rel. basepoint in the edge space. For $\lambda >1$, we say that $\alpha$ is {\bf $\lambda$-hyperbolic} if 
\[ \lambda l(\alpha_0) \le \max\{l(\alpha_{-m}), l(\alpha_m) \} \]
For every $i \in [-m,m{-}1]_\mathbb{Z}$, define $\tau_i:(i, i{+}1) \to \Gamma $ by \[ \tau_i(t) = 
\begin{cases} 
x & \text{ if } \alpha^*(t) = (x,s) \text{ and } 0 < s  \\
f(x) & \text{ if } \alpha^*(t) = (x,s) \text{ and } 0 > s 
\end{cases}\]
The path $\tau_i$ can be thought of as a projection of the piece $\alpha^*|_{(i,i+1)}$ to a cross-section of the vertex space between $\alpha_i$ and $\alpha_{i+1}$. The annulus $\alpha$ is {\bf $\rho$-thin} if $l(\tau_i) + 1 \le \rho$ for all $i \in [-m,m{-}1]_\mathbb{Z}$. Here, the length is measured after tightening  $\tau_i$ rel.~end points to be locally injective. This is akin to putting a taxicab-like metric on the universal cover to measure distances in the vertex space. A family of annuli that will be useful in the next section are the $1$-thin annuli. These are annuli whose trace of the basepoint projects to a null-homotopic path. One can also think of these annuli as homotopies of loops in the edge-space that respect the natural (semi)flow lines of $M_f$.

Now we can state the {\it combination theorem for mapping tori}:

\begin{thm}[{\cite[combination theorem]{BF92}}] Let $f:\Gamma \to \Gamma$ be a $\pi_1$-injective map. If there are numbers $\lambda > 1$, $m \ge 1$, and a function $H:\mathbb R \to \mathbb R$ such that any $\rho$-thin annulus of length $2m$ with girth at least $H(\rho)$ is $\lambda$-hyperbolic, then $\pi_1(M_f)$ is word-hyperbolic.\end{thm}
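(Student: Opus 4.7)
The plan is to deduce Gromov-hyperbolicity of the universal cover $\tilde M_f$ (equivalently of $\pi_1(M_f)$) from the annular flaring hypothesis, exploiting the HNN-extension structure. The group $\pi_1(M_f) = F*_\phi$ acts on its Bass-Serre tree $T$, and $\tilde M_f$ fibers equivariantly over $T$ with vertex spaces each isometric to the universal cover of $\Gamma$ (a tree) and edge identifications specified by $\phi$. Any geodesic in $\tilde M_f$ projects to a path in $T$ and decomposes into horizontal vertex-space portions and vertical jumps across edge spaces.

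First, I would reinterpret the hypothesis as a bi-infinite \emph{flaring} property: any pair of bi-infinite vertical trajectories in $\tilde M_f$ that remain at uniformly bounded horizontal distance along all of $\mathbb{R}$ must admit exponentially growing horizontal cross-sections in at least one vertical direction. This follows from the finite-length hypothesis by a diagonal/compactness argument applied to iterated length-$2m$ thin annuli, with $H(\rho)$ providing the required girth at each scale. Care is needed because the $\rho$-thin condition allows the trace of the basepoint to project non-injectively, so the extraction must keep track of the combinatorial tightening of $\tau_i$.

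Next, to obtain $\delta$-hyperbolicity, I would verify Gromov's thin-triangles condition (or equivalently a divergence criterion in the sense of Papasoglu). Given a geodesic triangle in $\tilde M_f$, project to $T$ to obtain a tripod; outside a bounded neighborhood of the tripod's center the three sides must fellow-travel pairwise. Thinness reduces to bounding the width of these fellow-travel strips: a long fellow-travel pair would cut out a $\rho$-thin annulus of large girth whose horizontal cross-sections fail to grow, contradicting the flaring hypothesis. The quantitative bound comes from iterating $\lambda$-hyperbolicity over disjoint length-$2m$ sub-annuli, so that girth grows geometrically and cannot remain bounded if the annulus is arbitrarily long.

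The main obstacle is the non-monotone behavior of geodesics: a geodesic can backtrack across the Bass-Serre tree, producing $\rho$-thin annuli whose $\rho$ accumulates along the backtracks and resists direct comparison with short annuli. Handling this requires the full flexibility of the function $H$, so that the required girth threshold scales with $\rho$ and dominates the accumulated backtracking length. Assembling the local annulus hypothesis into a global thin-triangle bound—and ruling out quasi-flats by the flaring property—is the technical core of the Bestvina-Feighn argument, and would be the hardest part of the proof to execute rigorously.
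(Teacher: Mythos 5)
This statement is not proved in the paper at all: it is the Bestvina--Feighn combination theorem, imported as a black box from \cite{BF92} (the paper only adds the remark that the qi-embedded hypothesis is automatic for finitely generated subgroups of free groups). So there is no internal proof to compare against, and your proposal should be judged as an attempted proof of Bestvina--Feighn's theorem itself.

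As such, it has a genuine gap: the two steps you defer are the entire content of the theorem. First, the hypothesis here concerns \emph{annuli}, i.e.\ maps of $S^1\times[-m,m]$, which detect free homotopy classes of loops; but the geodesic/bigon arguments you invoke require flaring of \emph{hallways}, i.e.\ maps of $I\times[-m,m]$ (strips between fellow-traveling paths). A long fellow-traveling pair of geodesic segments does not ``cut out a $\rho$-thin annulus'' --- it cuts out a strip, and closing it up into an annulus requires a loop, not a path. Bestvina--Feighn devote a separate argument (their Section 7) to showing that the annuli flare condition implies the hallways flare condition, and your sketch silently conflates the two. Second, the passage from hallway flaring to a thin-triangle or isoperimetric bound --- which you describe as ``the technical core \dots and would be the hardest part of the proof to execute rigorously'' --- is indeed the core, occupying most of \cite{BF92}; naming Papasoglu's bigon criterion and a ``diagonal/compactness argument'' does not substitute for carrying it out. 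What you have written is an accurate description of the shape of the known proof, not a proof; in the context of this paper the correct move is simply to cite \cite{BF92}, as the author does.
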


The hypothesis will be referred to as {\bf the annuli flare condition}. Annuli $\alpha$ lift to hallways in the universal cover, $\tilde \alpha: I \times [-m,m] \to \tilde M_f$. In this context, the number $\rho$ bounds the amount of shearing in the hallways and $1$-thin means there is no shearing. $H$ is a lower-bound for the thickness of the hallways' middle/girth. The flare condition is saying: Once we've bounded the shearing of the hallways, if we assume their girths are thick enough, then the hallways will get thicker towards one of their ends.

Technically, the combination theorem also requires that a {\it qi-embedded} condition is satisfied by the edge spaces, or equivalently, a quasi-convexity condition is satisfied by their fundamental groups. All our (vertex and edge) groups are finitely generated (f.g) free groups; f.g. subgroups of f.g. free groups are quasi-convex: free factors and finite index subgroups of f.g. free groups are quasi-convex and f.g. subgroups are free factors of finite index subgroups. So the condition is always satisfied under our assumptions. 

\begin{eg}[Continuing to Example \ref{a2}]\label{a3} We will exhibit two families of annuli: one that has $2^{i}$-hyperbolic members for all $i \ge 1$; another that has no member that is $\lambda$-hyperbolic for some $\lambda>1$. Let $c \subset G$ be the based loop defined earlier.

The first family: Fix $i\ge 1$, we have $g^{2i}(c) = c^{2^{2i}}$; therefore, the based loops $c$ and $c^{2^{2i}}$ in the edge-space of  $M_{g}$ are homotopic in $M_{g}$. Let $\alpha$ be some homotopy between the two based loops that is also a 1-thin annulus of length $2i$. Thus, $l(\alpha_0) = l(g^i(c)) = l(c^{2^i})=2^i$, $l(\alpha_i) = 2^{2i}$, and $2^{i} l(\alpha_0) \le l(\alpha_i)$. See the left half of Figure~\ref{fig2} for $i=2$.

The second family: Fix $i\ge 2$ and let $c'$ be the based loop given by $c'(x) = c(x+\sfrac{1}{2^i})$. Then $g^{i}(c')=c^{2^{i}}=g^{i}(c)$ as based loops; therefore, we get homotopies $c' \simeq g^{i-1}(c') \simeq c^{2^{i-1}} \simeq g(c)$ in $M_g$. Choose the homotopies so that their concatenation produces a $1$-thin annulus of length $2i-2$. Therefore, $l(\alpha_0) = l(g^{i-1}(c')) = 2^{i-1}$ but $l(c')=l(\alpha_{-i+1}) = 1$ and $l(\alpha_{i-1}) = l(g(c))= l(c^2) = 2$. These annuli are the antithesis of being $\lambda$-hyperbolic. See the right half of Figure~\ref{fig2} for $i=3$.
\end{eg}

\begin{figure}[h]
 \centering 
 \includegraphics[scale=0.37]{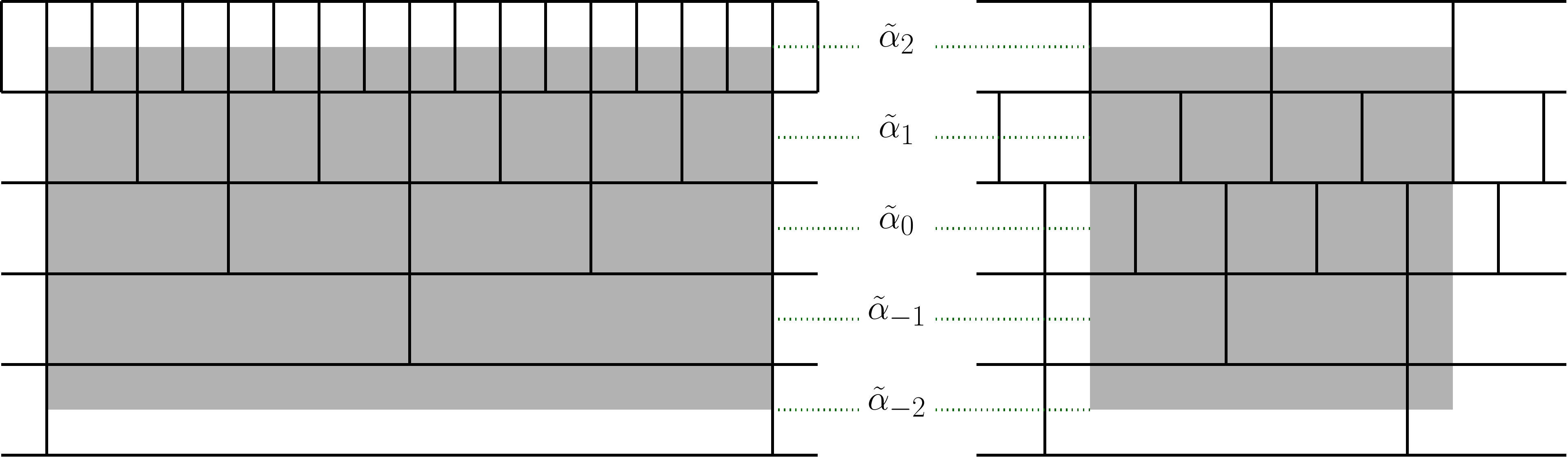}
 \caption{Lifts of two 1-thin annuli of length 4 to the universal cover $\tilde M_{g}$.}
 \label{fig2}
\end{figure}

\section{Annuli and Pullbacks}\label{anns}

Let $p : M_f \to S^1$ be given by $(x,t) \mapsto t$, $\alpha$ be an annulus of length $2m$, and 
$\beta:[-m,m] \to \mathbb R$, a lift of $p \circ \alpha^*$ to the universal cover of $S^1$.

\begin{defn} An annulus $\alpha$ is {\bf unidirectional} if $\beta$ is either increasing or decreasing on $[-m,m]$, equivalently, $\beta|_{[-m, m]_\mathbb{Z}}$ is strictly increasing or decreasing. Otherwise, it is {\bf bidirectional}. \end{defn}

Due to the fourth condition in the definition of an annulus, if $\alpha$ is bidirectional then $\beta$ switches directions exactly once and it switches from increasing to decreasing.

\begin{defn} An annulus $\alpha$ is {\bf strictly bidirectional} if $\beta(-m) = \beta(m) - 1 < \beta(0)$.  \end{defn}

We define a {\bf homotopy of annuli} $\alpha, \alpha'$ to be a homotopy that restricts to an edge space homotopy on each ring. Equivalently, a homotopy $\alpha \simeq \alpha'$ that induces a homotopy $\beta \simeq \beta'$ rel. integer points.  By this definition, being (strictly) bidirectional becomes a property of the homotopy classes of annuli. For simplicity's sake, we choose class representatives that are {\bf tight annuli}, i.e., the rings are locally injective even at the basepoint. For the rest of this section, all annuli and loops are assumed to be tight.

In the language of annuli, pullback stabilizing can be interpreted as:

\begin{prop}\label{main2}
Suppose $f:\Gamma\to \Gamma$ is an immersion. Then $\hat\Gamma_L$ is empty for some $L$ if and only if strictly bidirectional tight annuli have lengths uniformly bounded by some $2L$.
\end{prop}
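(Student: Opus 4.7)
My plan is to establish a direct correspondence between strictly bidirectional tight annuli and loops in the core pullback graphs $\hat\Gamma_{k}$. The conduit is the infinite cyclic cover $\tilde M_{\mathbb Z}$ of the mapping torus: lifting an annulus, each integer-height ring is a tight loop in a copy of $\Gamma$, consecutive rings are related by an $f$-iterate (going up) or an $f$-preimage (going down), and since $f$ is an immersion, $f$-iterates of tight loops remain tight, so these ring-ladders are rigidly determined.

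For the forward direction, suppose $\hat\Gamma_{L}=\emptyset$. Lemma~\ref{lem1} then gives $\hat\Gamma_{j}=\emptyset$ for all $j\ge L$, so the pullbacks stabilize. Given a strictly bidirectional tight annulus $\alpha$ of length $2m$, I read off the end rings $\sigma_{-}=\alpha_{-m}$, $\sigma_{+}=\alpha_{m}$ and the peak height. The up-ladders from the two ends meet at the peak in a common tight loop, giving $f^{k_{-}}(\sigma_{-})=f^{k_{+}}(\sigma_{+})$; after balancing indices by iterating $f$ on the shorter side (legitimate precisely because of the off-by-one profile of strict bidirectionality) the pair $(\sigma_{-},\sigma_{+})$ yields a loop $\gamma\in\Gamma_{k}$ for some $k$ comparable to the peak height, and the strictness (the peak cannot be lowered) upgrades this to $\gamma\in\hat\Gamma_{k}$. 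The hypothesis forces $k<L$, which bounds $m$ by $L$ up to a fixed additive constant coming from the $\beta$-profile.

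For the backward direction I prove the contrapositive: if $\hat\Gamma_{j}\neq\emptyset$ for arbitrarily large $j$, then strictly bidirectional tight annuli of arbitrarily large length exist. Given $(\sigma_{-},\sigma_{+})\in\hat\Gamma_{j}$, construct an annulus whose ring sequence is
\[
\sigma_{-},\ f(\sigma_{-}),\ \ldots,\ f^{j}(\sigma_{-})=f^{j}(\sigma_{+}),\ f^{j-1}(\sigma_{+}),\ \ldots,\ \sigma_{+},
\]
with the trace crossing the edge space exactly between consecutive rings and threading the vertex space upward on the left half and downward on the right half (adjusted slightly to satisfy the strictly bidirectional template). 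The immersion property keeps every ring tight, and the fact that $(\sigma_{-},\sigma_{+})\notin\Gamma_{j-1}$ prevents the peak from being lowered, so the result is a genuinely strictly bidirectional tight annulus of length at least $2j$, contradicting the uniform bound once $j>L$.

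The hardest part, in both directions, is verifying that the candidate annulus actually satisfies all four conditions of the annulus definition---particularly condition (4), that each trace piece between consecutive rings is essential in the vertex space---and ensuring that the dictionary between the integer data of strict bidirectionality and the algebraic data in $\hat\Gamma_{k}$ is exact (neither losing a multiplicative factor nor letting the loop slide out of $\hat\Gamma_{k}$ into a smaller $\Gamma_{j}$). Both verifications reduce, in the end, to the rigidity afforded by $f$ being an immersion together with the careful use of tight class representatives.
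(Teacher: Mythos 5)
Your argument is essentially the paper's own proof: both directions rest on the same dictionary between strictly bidirectional annuli (normalized to $1$-thin homotopy-class representatives so that the ring-ladder relations become genuine equalities $f(\alpha'_i)=\alpha'_{i\pm1}$) and loops $(\sigma_{-},\sigma_{+})$ in the core pullbacks $\hat\Gamma_k$, with condition (4) plus strictness guaranteeing the pair lands in $\hat\Gamma_k$ rather than in $\Gamma_{k-1}$. The one step the paper makes explicit that you leave implicit is the off-by-one normalization $\alpha'_i=f(\alpha_i)$ on the lower half of the annulus, which is precisely your ``balancing indices'' move.
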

\begin{proof}There is a correspondence between (classes of) strictly bidirectional annuli in $M_f$ and pullbacks of $\Gamma$. Let $\alpha$ be a strictly bidirectional annulus. For $i \in [-m, m]_\mathbb{Z}$, define:
\[ \alpha'_i = \begin{cases} f(\alpha_i) & \text{ if } -m \le i \le 0  \\ \alpha_i & \text{ otherwise.}\end{cases} \]
The $\alpha'_i$ have the nice property that $f^{|i|}(\alpha'_i) \simeq \alpha'_0$ in $\Gamma$.  We can and will henceforth choose a homotopy class representative $\alpha$ so that $f(\alpha'_i) = \alpha'_{i+1}$ if $i<0$, $f(\alpha'_i) = \alpha'_{i-1}$ if $i>0$, and $\tau_i$ is null-homotopic rel. endpoints for all $i$. The last condition is equivalent to saying $\alpha$ is $1$-thin.

Let $i > 0$. Since $f^i(\alpha'_{-i}) = \alpha'_0 = f^i(\alpha'_{i})$, by the definition of pullbacks, the pair $(\alpha'_{-i}, \alpha'_i)$ is a loop $\gamma_i$ in $\Gamma_i$. { Condition \ref{c4} in the definition of annuli and the assumption $\alpha$ is $1$-thin imply $\gamma_1 \subset \hat \Gamma_1$, and more generally, $\gamma_i \subset \hat \Gamma_i$ for all $i$.} 

Conversely, a loop $(\sigma_{-}, \sigma_{+}) \subset \hat \Gamma_m$ completely determines a $1$-thin strictly bidirectional annuli with length $2m-2$; length can be extended to $2m$ if $\sigma_{-}$ is the $f$-image of some loop in $\Gamma$. This in turn determines a homotopy class of annuli.  Thus, we can view classes of strictly bidirectional annuli of length $2m$ as certain loops in $\hat \Gamma_m$. \end{proof}

\begin{cor}\label{cor1} Suppose $f:\Gamma \to \Gamma$ is an immersion and $\hat \Gamma_L = \emptyset$ for some $L$. Then, up to reversal of direction, an annulus $\alpha$ with length greater than $4L$ will satisfy \[ \beta(-m) < \beta(0) < \beta(m)-1.\] \end{cor}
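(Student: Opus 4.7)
The strategy is to split by whether $\alpha$ is unidirectional or bidirectional, reducing the bidirectional case to Proposition~\ref{main2} via a symmetric sub-annulus about the peak. After tightening $\alpha$ within its homotopy class, I first observe that $|\beta(i{+}1) - \beta(i)| = 1$ for every $i \in [-m, m{-}1]_\mathbb{Z}$: the trace $\alpha^*|_{[i,i+1]}$ lies in a single vertex-space strip of the infinite cyclic cover (so the difference is at most $1$), and equality $\beta(i) = \beta(i{+}1)$ would render that piece homotopic rel endpoints into the edge space (the strip deformation retracts to its lower edge cross-section), contradicting condition~(4).

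In the unidirectional case, after reversing $\alpha$ if needed, $\beta$ is strictly increasing at integer times, so $\beta(0) = \beta(-m) + m$ and $\beta(m) = \beta(-m) + 2m$. Since $m > 2L \ge 2$, both $\beta(-m) < \beta(0)$ and $\beta(0) < \beta(m) - 1$ follow immediately.

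In the bidirectional case, $\beta$ has a unique peak at some integer $i_0 \in (-m, m)$, with $\beta$ increasing on $[-m, i_0]$ and decreasing on $[i_0, m]$. After reversing $\alpha$ if necessary (which sends $i_0$ to $-i_0$), I may assume $i_0 \ge 0$. The key step is to restrict $\alpha$ to $S^1 \times [2i_0 - m, m]$: this is a tight sub-annulus of length $2(m - i_0)$ whose $\beta$-function peaks at its midpoint with matching $\beta$-values at the two endpoints. Via the pullback correspondence from the proof of Proposition~\ref{main2}, this sub-annulus determines a loop in $\hat\Gamma_{m-i_0}$. Since $\hat\Gamma_L = \emptyset$, Lemma~\ref{lem1} forces $m - i_0 < L$, so $i_0 \ge m - L + 1$, which is strictly positive because $m > 2L$. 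Consequently $\beta$ is strictly increasing on $[-m, 0]$, giving $\beta(-m) < \beta(0) = \beta(-m) + m$; and $\beta(m) = \beta(-m) + 2i_0$ yields $\beta(m) - \beta(0) = 2i_0 - m \ge m - 2L + 2 \ge 3$, whence $\beta(0) < \beta(m) - 1$.

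\textbf{Main obstacle.} The trickiest step is setting up the symmetric sub-annulus around the peak carefully and identifying the precise pullback loop it determines—verifying in particular that condition~(4) places this loop in $\hat\Gamma_{m-i_0}$ rather than merely in $\Gamma_{m-i_0}$—so that the sharp bound $m - i_0 < L$ emerges directly from $\hat\Gamma_L = \emptyset$ and Lemma~\ref{lem1}, leaving no off-by-one slack between the hypothesis ``length greater than $4L$'' and the strict inequality $\beta(0) < \beta(m) - 1$.
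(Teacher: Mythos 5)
Your overall strategy --- split into unidirectional and bidirectional, extract a sub-annulus around the peak, and use Proposition~\ref{main2} to bound the short side --- is the right one, and your unidirectional case is fine. But your opening claim that $|\beta(i{+}1)-\beta(i)|=1$ for every $i$ is false, and the error propagates. The strip of the infinite cyclic cover between edge-space levels $k$ and $k{+}1$ is the mapping cylinder of $f$ with level $k$ as the domain end: flowing forward in $M_f$ applies $f$, so the strip deformation retracts onto its \emph{upper} cross-section, not the lower one. Hence condition (4) does exclude a piece with $\beta(i)=\beta(i{+}1)=k$ lying in the strip \emph{below} (a ``dip'', whose endpoints sit on the cross-section the strip retracts to), but it does not exclude one lying in the strip \emph{above} (a ``bump''): retracting that strip to level $k{+}1$ moves the endpoints to their $f$-images, and since $f_*$ is not surjective the piece need not be homotopic rel endpoints into level $k$. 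Flat steps in fact must occur: a strictly bidirectional annulus has $\beta(m)-\beta(-m)=1$, which is odd, while a sum of $2m$ steps each equal to $\pm1$ is even --- so under your claim strictly bidirectional annuli would not exist at all, contradicting the second family of Example~\ref{a3}, which the paper notes is strictly bidirectional and visibly has one flat ``bump'' step at its top. What transversality and condition (4) actually force is that the unique direction switch of a bidirectional annulus occurs inside an interval as a single bump step, never at an integer.

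Consequently your bidirectional case is built on the wrong picture. The profile is $a$ up-steps, one flat step on some $[i_0,i_0{+}1]$, then $b$ down-steps, with $a+b=2m-1$; after reversing so that $a>b$, the sub-annulus to extract is the \emph{asymmetric} one on $[i_0-(b{+}1),\,i_0+1+b]$, whose ends differ by exactly $1$ and which is therefore strictly bidirectional of length $2(b{+}1)$, giving $b+1\le L$ by Proposition~\ref{main2}. Your symmetric sub-annulus on $[2i_0-m,m]$ has \emph{equal} $\beta$-values at its ends, so it is not strictly bidirectional in the paper's sense and Proposition~\ref{main2} says nothing about it; worse, the associated pair $(\alpha_{2i_0-m},\alpha_m)$ only lands in $\Gamma_{m-i_0}$, and nothing prevents it from lying in a smaller pullback (even the diagonal $\Gamma_0$, e.g.\ for a palindromic annulus), so the bound $m-i_0<L$ does not follow. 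Once the bump is accounted for, the arithmetic does close up: $b\le L-1$ gives $i_0=m-1-b\ge m-L>L>0$, so $\beta(0)=\beta(-m)+m>\beta(-m)$, and $\beta(m)-1-\beta(0)=a-b-1-m=m-2b-2\ge m-2L>0$.
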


\begin{eg}[Continuing Example \ref{a3}] The proof of Proposition \ref{main2} gives a correspondence between classes of strictly bidirectional annuli and components of $\hat G_i$. Since the latter are never empty, we get that $M_{g}$ has strictly bidirectional annuli with unbounded lengths. Indeed, the second family of annuli we constructed in Example \ref{a3} are strictly bidirectional and unbounded; On the other hand, the first family are all unidirectional. \end{eg}

\begin{eg}[Continuing Example \ref{b2}] To contrast, we know that $\hat H_1$ consists of loops and $\hat H_2$ is empty. Since the definition of annulus requires them to have even length, $M_{h}$ has no bidirectional annuli. If we relax the definition of annuli to allow odd length, then all bidirectional annuli in $M_{h}$ have length $1$ and are homotopies $a^k \simeq b^k $ of edge-space loops.  Figure~\ref{fig3} is one explicit annulus with $k=2$. \end{eg}

\begin{figure}[h]
 \centering 
 \includegraphics[scale=0.57]{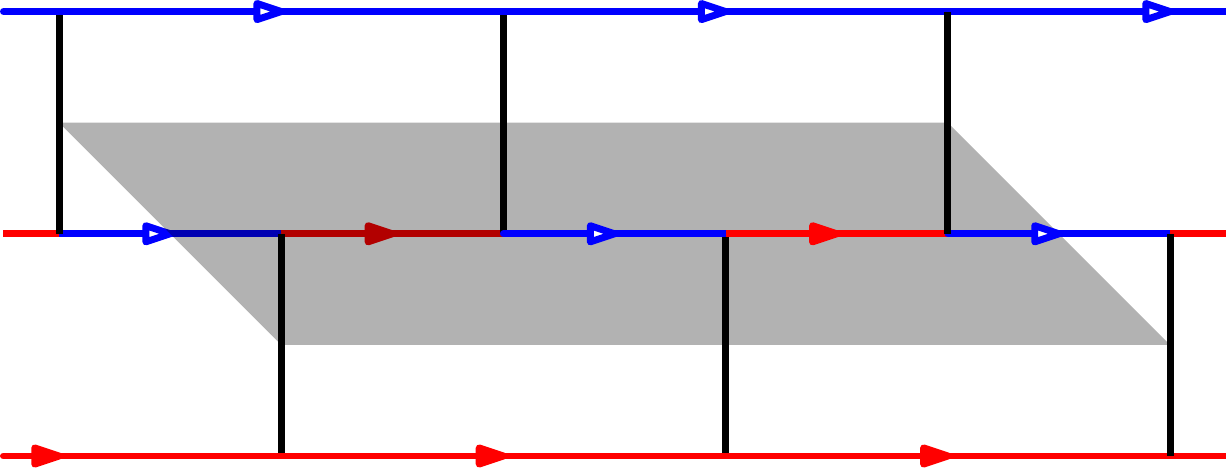}
 \caption{A lift of a $2$-thin bidirectional annulus with length $1$.}
 \label{fig3}
\end{figure}

\section{Hyperbolic Endomorphisms} \label{hyp}

An automorphism $\phi:G \to G$ of a word-hyperbolic group $G$ is said to be $\bf hyperbolic$ if:

\begin{center}
$\exists~\lambda>1$, $n\ge1$ such that for all nontrivial $g \in G, \lambda |g| \le \max( |\phi^n(g)|, |\phi^{-n}(g)| )$
\end{center} where $|\cdot|$ refers to the word-length with respect to some fixed finite generating set of $G$.

Bestvina-Feighn used their combination theorem to show that mapping tori of hyperbolic automorphisms are word-hyperbolic and we will prove an analogous statement for endomorphism of free groups (Theorem~\ref{keyprop}). First, we need to generalize the definition of hyperbolic automorphisms of free groups.

For the remainder of this section, assume a graph map $f:\Gamma \to \Gamma$ is a topological map that sends vertices to vertices and is locally injective on the interior of edges. In particular, $f$ maps edges to nontrivial edge-paths but is not necessarily locally injective at the vertices. Let $\lambda_f$ be the maximum $l(f(e))$ over all the edges $e$ in $\Gamma$, where $l(\cdot)$ is the combinatorial lengths of edge-paths in $\Gamma$. By our assumptions, $\lambda_f \ge 1$.

\begin{defn}\label{defhyp}For $\lambda>1$ and $n \ge 1$, we say a graph map $f:\Gamma \to \Gamma$ is {\bf $(\lambda,n)$-hyperbolic} if for all based nontrivial loops $\sigma: (S^1, *) \to \Gamma$, at least one of these two conditions holds:
\begin{enumerate}
\item \label{c1} $\lambda l(f^n(\sigma)) \le l(f^{2n}(\sigma))$
\item \label{c2} $\lambda l(f^n(\sigma)) \le l(\sigma)$
\end{enumerate}
We will say $(\lambda, n)$-Condition \ref{c1}/\ref{c2} holds for $\sigma$ if the corresponding condition above holds. Note that we assume the basepoint $\sigma(*)$ is a vertex of $\Gamma$, and the combinatorial lengths of based loops are measured after tightening rel. basepoint.
\end{defn}
\begin{rmk}\label{lambdafrmk} If $\lambda_f=1$, then $l(f^n(\sigma)) = l(\sigma)$ for all $n \ge 1$ and $f$ is not $(\lambda,n)$-hyperbolic since the definition of the latter requires $\lambda > 1$.
\end{rmk}

\begin{lemma}\label{explem} Suppose $f:\Gamma \to \Gamma$ is an immersion with no invariant loop of degree $d=1$, i.e., $f$ is atoroidal. Then $f$ is $(2,n)$-hyperbolic for some $n \ge 1$.\end{lemma}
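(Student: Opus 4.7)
The plan is to partition $\Gamma$ into an expanding part and a non-expanding part $Z$, use the atoroidal hypothesis to force $Z$ to be a forest, and then exploit the finite diameter of $Z$ to convert edgewise unboundedness into uniform exponential growth of loops. Since $f$ is an immersion, it sends tight paths to tight paths without shrinking combinatorial length, so the sequence $l(f^n(e))$ is monotone non-decreasing for every edge $e$, and analogously $l(f^n(\sigma)) \geq l(\sigma)$ for any tight loop $\sigma$. This already makes Condition~(2) of Definition~\ref{defhyp} incompatible with $\lambda > 1$, so it suffices to produce $N$ for which Condition~(1) with $\lambda = 2$ holds for every nontrivial loop.

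Let $Z \subset \Gamma$ be the subgraph of non-expanding edges, i.e., those $e$ with $\sup_n l(f^n(e)) < \infty$; the monotone dichotomy classifies every edge as either expanding or non-expanding, and $Z$ is $f$-invariant because the image of a non-expanding edge cannot contain an expanding one. The central claim is that $Z$ is a forest: otherwise an embedded loop $\sigma \subset Z$ has all iterates $f^n(\sigma)$ lying in $Z$ with uniformly bounded length, and since there are only finitely many cyclically reduced loops in $\Gamma$ below any fixed length, pigeonhole produces $a<b$ with $f^a(\sigma) \simeq f^b(\sigma)$ freely. Setting $\tau = f^a(\sigma)$, this yields $f^{b-a}(\tau) \simeq \tau$, a nontrivial invariant loop of degree~$1$ (nontriviality follows from $\pi_1$-injectivity of immersions), contradicting the atoroidal hypothesis. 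I expect this to be the main obstacle, since it is where the hypothesis enters and where nontriviality of the resulting loop must be tracked.

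Let $D$ be the diameter of the finite forest $Z$, and choose $N$ large enough that $l(f^N(e)) \geq 2(D+1)$ for every expanding edge $e$; this is possible by monotone unbounded growth on the finite expanding edge-set. For a tight nontrivial loop $\sigma$, the forest property of $Z$ forces $\sigma$, and hence $f^N(\sigma)$, to contain an expanding edge. Decompose the cyclic tight word $f^N(\sigma)$ into maximal runs of expanding edges and maximal runs of $Z$-edges, writing $k_e$ and $k_z$ for the respective total edge counts; each maximal $Z$-run is an embedded path in $Z$ and hence has at most $D$ edges, and cyclic alternation allows at most $k_e$ such runs, giving $k_z \leq k_e D$ and $l(f^N(\sigma)) \leq k_e(D+1)$. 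Expanding $l(f^{2N}(\sigma)) = \sum_{e' \in f^N(\sigma)} l(f^N(e'))$ and using just the expanding contributions then yields $l(f^{2N}(\sigma)) \geq 2(D+1) k_e \geq 2 l(f^N(\sigma))$, which is Condition~(1) of $(2,N)$-hyperbolicity.
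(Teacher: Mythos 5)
Your argument is correct in substance and rests on the same two pillars as the paper's proof of Lemma~\ref{explem}: the atoroidal hypothesis forces the non-growing part of $\Gamma$ to be a forest (via pigeonhole on the bounded-length, automatically tight, orbit of an embedded loop, with $\pi_1$-injectivity of immersions guaranteeing nontriviality), and a run-counting argument then shows the expanding edges dominate. The organizational difference is that you work directly with the partition of edges into expanding and non-expanding, whereas the paper first collapses a maximal $f$-invariant forest, spends a paragraph verifying that the quotient map is still an immersion, proves the statement on the quotient, and then lifts it back; since your $Z$ is itself an $f$-invariant forest containing every $f$-invariant forest, it is exactly the forest the paper collapses, so you bypass that construction entirely --- a genuine streamlining, and your use of the diameter $D$ rather than the edge count of the forest is also slightly sharper. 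One small repair: Definition~\ref{defhyp} measures lengths of \emph{based} loops tightened rel.\ basepoint, so $f^N(\sigma)$ need not be cyclically reduced, and the maximal $Z$-run straddling the basepoint may fail to be immersed there. You therefore get at most $k_e+1$ maximal $Z$-runs rather than $k_e$, hence $l(f^N(\sigma))\le k_e(D+1)+D$, and your choice of $N$ only yields $l(f^{2N}(\sigma))\ge 2\,l(f^N(\sigma))-2D$. Requiring instead $l(f^N(e))\ge 2(2D+1)$ for every expanding edge $e$ (or splitting the wrap-around run into two runs of length at most $D$ each) closes this gap and changes nothing else; the paper's own constant in its forest case is loose in the same way.
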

\begin{proof} Suppose $\Gamma$ has no nontrivial $f$-invariant forest. If there exists an edge $e$ such that $l(f^n(e)) = 1$ for all $n \ge 1$. Then the $f$-orbit $\mathcal O = \cup_{n=1}^\infty f^n(e)$ is an $f$-invariant subgraph of $\Gamma$. By our assumption, $\mathcal O$ is not a forest, hence it contains an embedded loop $\sigma$. The orbit of $\sigma$ consist of loops with the same length and there are only finitely many of them. Thus the orbit is eventually periodic, which contradicts the atoroidal assumption. Therefore, for all edges $e$, there is an $n_e \ge 1$ such that $l(f^{n_e}(e)) \ge 2$. Set $n = \max\{n_e ~:~ e \text{ is an edge.} \}$. Thus $l(f^n(e)) \ge 2$ for all edges $e$ and $f$ is $(2,n)$-hyperbolic since $f$ is an immersion.

Suppose $\Gamma$ has a nontrivial $f$-invariant forest. Collapse a maximal nontrivial $f$-invariant forest to produce a graph $\Gamma'$ and map $f':\Gamma'\to\Gamma'$. Since $f$ is an immersion, it is a homeomorphism when restricted to a collapsed tree (component of forest). In particular, distinct boundary points of a component of the forest have distinct $f$-images. Suppose $v' \in \Gamma'$ is the image of a vertex in $\Gamma$ whose $f$-image is in a collapsed tree. The directions at $v'$, $T_{v'}\Gamma'$, correspond to directions at the boundary of a collapsed tree $T_{\partial Y} \Gamma$ if $v'$ itself is the image of a collapsed tree $Y$, otherwise they correspond to the directions of some vertex of $\Gamma$ outside the collapsed forest $F$. Since $f$ is injective on the boundary of a collapsed tree and $df$, the union of all derivative maps, is injective, then $df'_{v'}$ is also injective. Clearly, $df'_{v'}$ is also injective if $v'$ is the image of a vertex in $\Gamma$ whose $f$-image lies outside the collapsed forest. Therefore, $df'$ is injective and $f'$ is an immersion. By the previous paragraph, $f'$ is $(2,n')$-hyperbolic for some $n'$. Choose $k$ such that $2^k - 1 > $ number of edges in the maximal forest. If we let $n = kn'$, then for any immersed loop in $\Gamma$, any edge of the loop lying outside the maximal forest has $f^n$-image that is at least $2^k$ edges long, which will be longer than any immersed path lying in the forest. Thus $f$ is $(2,n)$-hyperbolic. \end{proof}

We can now state a generalization of Bestvina-Feighn's theorem on mapping tori of hyperbolic automorphisms: 

\begin{thm}\label{keyprop} If $f:\Gamma \to \Gamma$ is a $(\lambda,n)$-hyperbolic graph map and strictly bidirectional annuli in $M_f$ have lengths bounded by $2L$ for some $L > 1$, then $\pi_1(M_f)$ is word-hyperbolic.\end{thm}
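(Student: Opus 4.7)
The plan is to verify the annuli flare condition required by the Bestvina-Feighn combination theorem; the qi-embedded condition for edge groups holds automatically since all vertex and edge groups are finitely generated free. I would take $m = Kn$ for a large integer $K$ to be fixed later (depending on $L$, $\lambda$, and $\lambda_f$), and choose $H(\rho)$ to be a sufficiently large constant multiple of $\rho m$ to dominate the additive errors described below.

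The first step is to compare ring lengths to $f$-iterates. If $\alpha$ is $\rho$-thin and $\beta$ is monotone on a subinterval $[i,j]$ of ring indices, the $\rho$-thin trace shows that $\alpha_j$ equals the corresponding $f$-iterate of $\alpha_i$ up to conjugation by a path in $\Gamma$ of length at most $\rho\,(j-i)$; consequently based ring lengths differ from the corresponding $f$-iterate lengths by an additive error of $O(\rho m)$. Choosing $H(\rho)$ large enough absorbs these errors when comparing multiplicative ratios of lengths.

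The second step dispatches unidirectional annuli. After reversal assume $\beta$ is strictly increasing, and set $\ell_k = l(f^{kn}(\alpha_{-m}))$ for $k = 0, 1, \ldots, 2K$, so that $\ell_0, \ell_K, \ell_{2K}$ approximate $l(\alpha_{-m}), l(\alpha_0), l(\alpha_m)$ respectively. Applying Definition~\ref{defhyp} to each $\sigma = f^{(k-1)n}(\alpha_{-m})$ yields the ``no-valley'' inequality $\max(\ell_{k-1}, \ell_{k+1}) \ge \lambda\, \ell_k$ for every interior $k \in [1, 2K-1]$. Equivalently, the ratios $\mu_k = \ell_{k+1}/\ell_k$ satisfy: whenever $\mu_{k-1} > 1/\lambda$ one has $\mu_k \ge \lambda$. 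An induction then shows there is a pivot $k_0$ so that $\mu_k \le 1/\lambda$ for $k < k_0$ and $\mu_k \ge \lambda$ for $k > k_0$; splitting on whether $k_0 \le K$ or $k_0 > K$ produces in every case $\max(\ell_0, \ell_{2K}) \ge \lambda^{K}\, \ell_K$. Combined with the error absorption above, this makes $\alpha$ a $\lambda'$-hyperbolic annulus for any $\lambda' < \lambda^K$.

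The third step, which I expect to be the main obstacle, handles bidirectional annuli via the strictly bidirectional bound. Suppose $\alpha$ is bidirectional with peak at $t_0$ and, after reversal, the descending segment $[t_0, m]$ is the shorter side of length $b = m - t_0$. Pairing the descending segment with a matching portion of the ascending side builds a sub-annulus of length roughly $2b$ whose end heights differ by exactly one (after a bounded parity adjustment), hence a strictly bidirectional annulus in the sense of the paper; the hypothesis therefore forces $b \le L + O(1)$. Since going down the $b$-step tail gives $\alpha_{t_0} = f^b(\alpha_m)$ up to conjugation, we obtain $l(\alpha_m) \ge \lambda_f^{-b}\, l(\alpha_{t_0})$. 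Applying the unidirectional analysis of the previous step to the long ascending sub-annulus $\alpha|_{[-m, t_0]}$ (which still contains $\alpha_0$ as a deep interior ring when $K \gg L$) yields $\max(l(\alpha_{-m}), l(\alpha_{t_0})) \ge \lambda^{K - O(1)}\, l(\alpha_0)$, and combining the two estimates gives
\[ \max(l(\alpha_{-m}), l(\alpha_m)) \ge \lambda^{K - O(1)}\, \lambda_f^{-(L+O(1))}\, l(\alpha_0), \]
which exceeds $l(\alpha_0)$ once $K$ is chosen large enough. Bestvina-Feighn then concludes that $\pi_1(M_f)$ is word-hyperbolic.
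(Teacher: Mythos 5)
Your proposal is correct and follows essentially the same route as the paper: verify the Bestvina--Feighn flare condition; treat the $(\lambda,n)$-hyperbolicity as a no-valley/pivot inequality on the sequence $l(f^{kn}(\alpha_{-m}))$ to flare unidirectional annuli; and use the bound on strictly bidirectional annuli to show the short side of a bidirectional annulus has length at most $L+O(1)$, so that $l(\alpha_m)$ and $l(\alpha_{t_0})$ differ by at most the bounded factor $\lambda_f^{L+O(1)}$, which is beaten by taking the half-length $m$ large --- this is exactly the paper's three-case analysis. One quantitative correction: the shearing error is not additive $O(\rho m)$ but $O(\rho\lambda_f^{m})$, because the conjugating path relating $\alpha_j$ to $f^{j-i}(\alpha_i)$ is $f^{j-i-1}(\tau_i)\cdots\tau_{j-1}$, whose length is bounded by $\rho(\lambda_f^{j-i}-1)/(\lambda_f-1)$ rather than $\rho(j-i)$; this is harmless since $m$ is fixed before $H$ is chosen, but your threshold must be $H(\rho)=C\rho$ with $C$ exponential in $m$ (the paper takes $H(\rho)=\frac{4\rho}{\lambda_f-1}(\lambda_f^{2m}-1)2^m$), not merely a large multiple of $\rho m$ in the sense your error estimate suggests.
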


\begin{rmk}Gautero proved a weaker version of this theorem  with the assumption that $f_*(\pi_1(\Gamma))\le\pi_1(\Gamma)$ is malnormal \cite[Theorem 13.2]{Gau}. This is equivalent to assuming $M_f$ has no bidirectional annuli. It is worth noting that Gautero does not use the combination theorem.\end{rmk}

\begin{proof} Choose $k \ge 1$ so that $\lambda^k > 2$ and we have $f$ is $(2, kn)$-hyperbolic. Fix $r \ge 1$ such that $2^r (\sfrac{2}{\lambda_f^{kn}})^L \ge 8$. Define $m = (2L+r)kn$ and 
\[ H(\rho) = \frac{4\rho}{\lambda_f-1}(\lambda_f^{2m}-1)2^m \] 
By Remark \ref{lambdafrmk}, $\lambda_f > 1$ as $f$ is $(\lambda,n)$-hyperbolic. Suppose $\alpha$ is a $\rho$-thin annulus of length $2m$ with $l(\alpha_0) \ge H(\rho)$. We will show that $\alpha$ is $7$-hyperbolic. Without loss of generality, assume $\beta(-m) = \beta(0)-m$. The lower half of the annulus gives us the following homotopy in $\Gamma$ rel. basepoint:

\[ \label{low} f^m(\alpha_{-m}) \simeq f^{m-1}(\tau_{-m})\cdots \tau_{-1} \alpha_0 \bar \tau_{-1} \cdots f^{m-1}(\bar \tau_{-m}) \]
This homotopy allows us to bound the length of $\alpha_0$ in terms of $f^m(\alpha_{-m})$:
\[ \begin{aligned}\label{bound}  l(f^m(\alpha_{-m})) &\ge  l(\alpha_0) - 2\sum_{i=0}^{m-1} l(f^{m-1-i}(\tau_{-m+i})) \\ 
&\ge l(\alpha_0) - 2\sum_{i=0}^{m-1} \lambda_f^{m-1-i} \rho \\
&= l(\alpha_0) - \frac{2\rho}{\lambda_f-1}(\lambda_f^m-1) \end{aligned}\]

There are three cases to consider:

\noindent {\bf Case 1, $(2,kn)$-Condition \ref{c2} holds for $f^{m-kn}(\alpha_{-m})$.} 

Since $(2,kn)$-Condition \ref{c2} holds for $f^{m-kn}(\alpha_{-m})$, by induction, we get the inequality:
\[\begin{aligned} l(\alpha_{-m}) 
&\ge 2^{2L+r} l(f^m(\alpha_{-m})) \\
&\ge 2^{2L+r} l(\alpha_0) - \frac{2\rho}{\lambda_f-1}(\lambda_f^m-1)2^{2L+r} \\
& > 8 l(\alpha_0) - H(\rho) \\
& \ge 7 l(\alpha_0)  & \text{ as } l(\alpha_0) \ge H(\rho)
\end{aligned}\]

\noindent {\bf Case 2, $\alpha$ is unidirectional and $(2,kn)$-Condition \ref{c1} holds for $f^{m-kn}(\alpha_{-m})$.} 

The top half of the annulus gives us:

\[  f^{2m}(\alpha_{-m}) \simeq f^{2m-1}(\tau_{-m})\cdots \tau_{m-1} \alpha_m \bar \tau_{m-1} \cdots f^{2m-1}(\bar \tau_{-m}) \]

Since $(2,kn)$-Condition \ref{c1} holds for  $f^{m-kn}(\alpha_{-m})$, by induction, we get the inequality:

\[\begin{aligned} 2^{2L+r} \left(l(\alpha_0) - \frac{2\rho}{\lambda_f-1}(\lambda_f^m-1) \right)
&\le 2^{2L+r} l(f^m(\alpha_{-m})) \\
&\le l(f^{2m}(\alpha_{-m})) \\
&\le l(\alpha_m) + 2\sum_{i=0}^{2m-1} l(f^{m-1-i}(\tau_{-m+i})) \\
&\le l(\alpha_m) + \frac{2\rho}{\lambda_f-1}(\lambda_f^{2m}-1)
\end{aligned}\]
Rearranging this inequality:
\[\begin{aligned} l(\alpha_m) &\ge 2^{2L+r} l(\alpha_0) - \frac{2\rho}{\lambda_f-1}(\lambda_f^m-1)2^{2L+r} - \frac{2\rho}{\lambda_f-1}(\lambda_f^{2m}-1) \\
& > 8 l(\alpha_0) - H(\rho) \\
& \ge 7 l(\alpha_0)
\end{aligned}\]

\noindent {\bf Case 3, $\alpha$ is bidirectional and $(2,kn)$-Condition \ref{c1} holds for $f^{m-kn}(\alpha_{-m})$.} 

By the hypothesis on strictly bidirectional annuli and inequality $m > 2L$, there exists $s,t \ge 1$  such that $\beta(0) < \beta(s) = \beta(m)-1 < \beta(s+t) = \beta(s+t+1)$. The top half of the annulus induces two homotopies:

\[\begin{aligned}f^{m+s+t}(\alpha_{-m}) &\simeq f^{m+s+t-1}(\tau_{-m})\cdots \tau_{s+t-1} \alpha_{s+t} \bar \tau_{s+t-1} \cdots f^{m+s+t-1}(\bar \tau_{-m}) \\ 
f^{t}(\alpha_{m}) &\simeq f^{t-1}(\bar\tau_{m-1})\cdots \bar\tau_{s+t} f(\alpha_{s+t})  \tau_{s+t} \cdots f^{t-1}(\tau_{m-1}) \end{aligned}\]
Choose $u \in [0,kn)_\mathbb{Z}$ so that $v = \frac{s+t+u}{kn} = \left \lceil \frac{s+t}{kn}\right \rceil$ is an integer. Since $(2,kn)$-Condition~ \ref{c1} holds for $f^{m-kn}(\alpha_{-m})$, we get \begin{equation}\label{eq1} 2^v l(f^m(\alpha_{-m})) \le l(f^{u}(\alpha_{s+t})) + \frac{2\rho}{\lambda_f-1}(\lambda_f^{2m}-1)\lambda_f^u  \end{equation}

The second homotopy gives us:\begin{equation}\label{eq2}\lambda_f^{t+u-1} l(\alpha_{m}) \ge l(f^{t+u-1}(\alpha_m)) \ge l(f^u(\alpha_{s+t})) - \frac{2\rho}{\lambda_f-1}(\lambda_f^t-1)\lambda_f^{u-1}  \end{equation}
Rewrite (\ref{eq2}) and combine it with (\ref{eq1}) to get 
\[ 2^v l(f^m(\alpha_{-m})) \le \lambda_f^{t+u-1} l(\alpha_m) + \frac{2\rho}{\lambda_f-1}(\lambda_f^t-1)\lambda_f^{u-1} + \frac{2\rho}{\lambda_f-1}(\lambda_f^{2m}-1)\lambda_f^{u} \]
\[\begin{aligned}\therefore \quad \frac{2^v}{\lambda_f^{t+u-1}} \left(l(\alpha_0) - \frac{2\rho}{\lambda_f-1}(\lambda_f^m-1) \right) &\le \frac{2^v}{\lambda_f^{t+u-1}} l(f^m(\alpha_{-m})) \\
&\le l(\alpha_{m}) + \frac{2\rho}{\lambda_f-1}\frac{\lambda_f^t-1}{\lambda_f^t} + \frac{2\rho}{\lambda_f-1}\frac{\lambda_f^{2m}-1}{\lambda_f^{t-1}} \\
&\le l(\alpha_m) + \frac{4\rho}{\lambda_f-1}(\lambda_f^{2m}-1) \qquad \text{ since } \lambda_f \ge 1\\
\end{aligned}\]
By construction and hypothesis, $m = (2L+r)kn = s+2t$, $u < kn$, and $t < L$. These can be used to show $v > L + r$ and $t+u-1 \le L(kn)$, so $ \frac{2^v}{\lambda_f^{t+u-1}} > 2^r\left(\frac{2}{\lambda_f^{kn}}\right)^L \ge 8$ from our choice of $r$. Therefore, 
\[\begin{aligned} l(\alpha_m) &\ge 8 l(\alpha_0) - \frac{2\rho}{\lambda_f-1}(\lambda_f^m-1)8 - \frac{4\rho}{\lambda_f-1}(\lambda_f^{2m}-1) \\
& > 8 l(\alpha_0) - H \\
&\ge 7 l(\alpha_0)
\end{aligned}\]

We have covered all the cases and shown that $M_f$ satisfies the annuli flare condition. By the combination theorem, $\pi_1(M_f)$ is word-hyperbolic.
\end{proof}

\begin{eg}[Continuing Example \ref{b2}]\label{b3} Since $\hat H_2$ is empty, the Sapir group is word-hyperbolic.\end{eg}

\begin{thm}\label{mainthm} Suppose $f:\Gamma \to \Gamma$ is an immersion. The following are equivalent:
 \begin{enumerate} 
 \item $\pi_1(M_f)$ is word-hyperbolic.
 \item $\pi_1(M_f)$ contains no $BS(1,d)$ subgroups for $d \ge 1$.
 \item $f$ has no invariant loop.
 \item $f$ is $(\lambda,n)$-hyperbolic and $\hat \Gamma_L = \emptyset$ for some $L \ge 1$.
 \end{enumerate} \end{thm}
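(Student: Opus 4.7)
The plan is to establish the cyclic chain $(1) \Rightarrow (2) \Rightarrow (3) \Rightarrow (4) \Rightarrow (1)$. Each step is essentially an assembly of results already proved in the preceding sections, so the real technical work has been done upstream in Propositions \ref{main} and \ref{main2}, Lemma \ref{explem}, and Theorem \ref{keyprop}.

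For $(1) \Rightarrow (2)$ I would invoke the standard fact that word-hyperbolic groups contain no $BS(1,d)$ subgroup: the case $d=1$ gives $\mathbb{Z}^2$, which is excluded because word-hyperbolic groups have virtually cyclic centralizers, while for $d \ge 2$ the subgroup $BS(1,d)$ contains an exponentially distorted cyclic subgroup, contradicting the fact that cyclic subgroups of word-hyperbolic groups are quasi-isometrically embedded. For $(2) \Rightarrow (3)$ I would argue the contrapositive: an invariant loop yields $k, d \ge 1$ and $1 \ne g \in F$ with $\phi^k(g) = xg^d x^{-1}$, so in the HNN presentation $\pi_1(M_f) = \langle F, t \mid t^{-1}yt = \phi(y) \rangle$ the element $h = x^{-1}t^k$ satisfies $h^{-1}g h = g^d$. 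A short check using that $\phi^k$ is injective on $\langle g \rangle$ confirms that $\langle g, h \rangle \cong BS(1,d)$ rather than a proper quotient.

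For $(3) \Rightarrow (4)$ I would split by the degree $d$ of the absent invariant loops. Absence of invariant loops of degree $d=1$ is atoroidality, so Lemma \ref{explem} supplies $(2,n)$-hyperbolicity of $f$ for some $n$. Absence of invariant loops of degree $d \ge 2$ combined with the contrapositive of Proposition \ref{main} yields $\hat \Gamma_L = \emptyset$ for some $L$. For $(4) \Rightarrow (1)$, Proposition \ref{main2} converts $\hat \Gamma_L = \emptyset$ into a uniform length bound $2L$ on strictly bidirectional tight annuli in $M_f$, which together with $(\lambda, n)$-hyperbolicity is precisely the hypothesis of Theorem \ref{keyprop}, delivering word-hyperbolicity of $\pi_1(M_f)$.

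The deep content has already been packaged upstream, so the main obstacle here is bookkeeping: matching the two qualitatively different pieces of condition (4) against the two qualitatively different degrees (namely $d=1$ versus $d \ge 2$) of an invariant loop, and checking that condition (3), as a single unqualified hypothesis, really does kill both simultaneously. Among the assembly steps, the one I would treat most carefully is the identification in $(2) \Rightarrow (3)$ of $\langle g, h \rangle$ as isomorphic to $BS(1,d)$ rather than a quotient; this uses injectivity of every $\phi^k$ on the cyclic subgroup $\langle g \rangle$, so that no relation beyond $h^{-1}g h = g^d$ is forced.
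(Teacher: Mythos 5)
Your proof follows the paper's own argument essentially verbatim: $(1)\Rightarrow(2)$ by the standard obstructions to hyperbolicity, $(2)\Rightarrow(3)$ via the $BS(1,d)$ embedding (the paper cites Kapovich and likewise defers the normal-form check of injectivity), $(3)\Rightarrow(4)$ by splitting the absence of invariant loops into the $d=1$ case (Lemma \ref{explem}) and the $d\ge 2$ case (Proposition \ref{main}), and $(4)\Rightarrow(1)$ from Proposition \ref{main2} fed into Theorem \ref{keyprop}. The one slip is the conjugator in $(2)\Rightarrow(3)$: with the relation $t^{-1}yt=\phi(y)$ and $\phi^k(g)=xg^dx^{-1}$, the element satisfying $h^{-1}gh=g^d$ is $h=t^kx$ rather than $x^{-1}t^k$ (your choice gives $h^{-1}gh=\phi^k(xgx^{-1})$), but this does not affect the argument.
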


\begin{proof} {~}

\noindent $(1){\implies}(2)$: $BS(1,d)$ subgroups are an obstruction to hyperbolicity.

\noindent $(2){\implies}(3)$: Due to Kapovich \cite[Lemma 2.3]{Kap}:

It is easy to see that having an $f$-invariant loop with degree $d$ implies there is a homomorphism $BS(1,d)\to\pi_1(M_f)$. Using normal forms, one shows that the homomorphism is injective.

\noindent $(3){\implies}(4)$: This is Lemma \ref{explem} and Proposition \ref{main}.

\noindent $(4){\implies}(1)$: This is Proposition \ref{main2} and Proposition \ref{keyprop}.\end{proof}

To remove the immersion assumption, it suffices to show that for any $\pi_1$-injective map
$f:\Gamma \to \Gamma$ with no invariant loop, the following holds:

\begin{enumerate}
\item $f$ is $(\lambda,n)$-hyperbolic --- this is analogous to Brinkmann's theorem/Lemma \ref{explem}. 
\item pullbacks stabilize --- this is analogous to Proposition \ref{main}.
\end{enumerate}

Combining Theorem \ref{mainthm} with Remark \ref{algo}, we get:

\begin{cor}\label{algo2} For any immersion $f$, there is a computable integer $L(f)$ such that $f$ has no invariant loop with degree $d \ge 2$ if and only if $\hat \Gamma_{L(f)}$ is empty. \end{cor}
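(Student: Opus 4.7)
The plan is to take $L(f)$ to be the integer supplied by the recipe in Remark \ref{algo} and verify both directions of the biconditional with this choice. The implication ``$f$ has no invariant loop of degree $d \ge 2 \Rightarrow \hat{\Gamma}_{L(f)} = \emptyset$'' is the direct contrapositive of Remark \ref{algo}. The substance of the corollary is therefore the converse, which plays the role of a missing partner to Proposition \ref{main}: if $f$ admits an invariant loop of degree $d \ge 2$, then $\hat{\Gamma}_{L(f)}$ is nonempty. I will actually prove the stronger claim that $\hat{\Gamma}_i \ne \emptyset$ for every $i \ge 1$ in this case.

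Suppose $f^k(\sigma) \simeq \sigma^d$ for some nontrivial loop $\sigma$ and $d \ge 2$. Because roots are unique in a free group, I may replace $\sigma$ by its primitive root without changing $d$; so assume $\sigma$ represents a primitive conjugacy class. Since $f$ is an immersion, $f^k \circ \sigma$ is an immersed loop freely homotopic to $\sigma^d$, and in a graph any free-homotopy class has a unique cyclically reduced (hence immersed) representative up to reparameterization. Hence $f^k \circ \sigma$ and $\sigma^d$ coincide as cyclic paths, and after a suitable rotation of $S^1 = \mathbb{R}/\mathbb{Z}$ in the parameterization of $\sigma$ this yields the exact pointwise identity $f^k \sigma(t) = \sigma(d t)$.

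For every $n \ge 1$, define the loop $\mu_n : S^1 \to \Gamma \times \Gamma$ by $\mu_n(t) = (\sigma(t), \sigma(t + 1/d^n))$. Iterating gives $f^{jk} \sigma(t) = \sigma(d^j t)$, whence $(f^{nk} \times f^{nk}) \mu_n(t) = (\sigma(d^n t), \sigma(d^n t))$ lies on the diagonal, placing $\mu_n$ in $\Gamma_{nk}$. On the other hand $(f^{(n-1)k} \times f^{(n-1)k}) \mu_n(t) = (\sigma(d^{n-1} t), \sigma(d^{n-1} t + 1/d))$ is not on the diagonal, since otherwise $\sigma$ would be $(1/d)$-periodic and thus a proper $d$-th power, violating primitivity. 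So $\mu_n \in \Gamma_{i_n} \setminus \Gamma_{i_n - 1}$ for some $(n-1)k < i_n \le nk$. By Lemma \ref{lem0} the component of $\Gamma_{i_n}$ containing $\mu_n$'s image is not a component of $\Gamma_{i_n - 1}$, and since it contains the loop $\mu_n$ its core is nonempty; thus $\hat{\Gamma}_{i_n} \ne \emptyset$. As $i_n \to \infty$ with $n$, Lemma \ref{lem1} upgrades this to $\hat{\Gamma}_i \ne \emptyset$ for all $i \ge 1$, and in particular $\hat{\Gamma}_{L(f)} \ne \emptyset$.

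The main technical hurdle is the passage from the free homotopy $f^k(\sigma) \simeq \sigma^d$ to the strict equality $f^k \sigma(t) = \sigma(d t)$ that drives the construction of $\mu_n$; this rests on the rigidity of immersed loops in $1$-dimensional complexes, where each conjugacy class has a canonical cyclically reduced representative. Once that identity is available, the primitivity of $\sigma$ does the remaining heavy lifting by forbidding any earlier pullback level from absorbing $\mu_n$.
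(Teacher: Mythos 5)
Your overall strategy is sound: the forward implication is indeed the contrapositive of Remark \ref{algo}, and producing essential loops $\mu_n$ in arbitrarily deep pullbacks from an invariant loop of degree $d\ge 2$ (generalizing the circles of Example \ref{a2}) and then invoking Lemmas \ref{lem0} and \ref{lem1} is a legitimate plan for the converse. However, the step you yourself flag as the main technical hurdle does not hold as stated, and the construction of $\mu_n$ collapses without it. Uniqueness of the immersed representative of a free homotopy class only gives $f^k\circ\sigma=\sigma\circ\delta$ where $\delta=m_d\circ h$ for some orientation-preserving homeomorphism $h$ of $S^1$ and $m_d(t)=dt$; so $\delta$ is a degree-$d$ covering map of $S^1$, but to get the identity $f^k\sigma(t)=\sigma(dt)$ you would need $\delta$ to be topologically conjugate to $m_d$ (a rotation certainly does not achieve this), and that is false in general. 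Concretely, for the immersion of the rose with $f(a)=a$ and $f(b)=bab$ one has $f(ab)=(ab)^2$, so $\sigma=ab$ is a primitive invariant loop of degree $2$ with $k=1$; here $\delta$ maps the arc carrying $a$ onto itself, so it has a whole interval of non-expanded points and is not conjugate to $m_2$ under any reparameterization of $\sigma$. With the literal rotation, the pair $(\sigma(t),\sigma(t+\sfrac{1}{2}))$ is not sent to the diagonal by $f\times f$, so your $\mu_1$ does not even lie in $\Gamma_1$.

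The gap is repairable while preserving your outline. Since every degree-$d^n$ covering of $S^1$ is regular with deck group $\mathbb Z/d^n\mathbb Z$, replace the rotation $t\mapsto t+\sfrac{1}{d^n}$ by a nontrivial deck transformation $\theta_n$ of $\delta^n$ (so $\delta^n\circ\theta_n=\delta^n$ but $\theta_n\ne\mathrm{id}$) and set $\mu_n(t)=(\sigma(t),\sigma(\theta_n(t)))$; this does land in $\Gamma_{nk}$, and if $\mu_n$ were contained in $\Gamma_{(n-1)k}$ one checks that $\sigma\circ\eta=\sigma$ for the order-$d$ deck transformation $\eta$ of $\delta$ itself, forcing $\sigma$ to factor through a degree-$d$ cover of $S^1$ and contradicting primitivity --- so your use of primitivity survives essentially verbatim, but it must be run through this covering-space bookkeeping rather than through linear rotations. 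For comparison, the paper obtains the corollary formally from Theorem \ref{mainthm} together with Remark \ref{algo} rather than by constructing loops in the pullbacks directly; your route, once repaired, is more self-contained and makes the converse direction explicit.
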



\section{Fully Irreducible Endomorphisms}\label{irred}

In this section, $F_n$ is a free group with finite rank $n \ge 2$.

\begin{defn} An injective endomorphism $\phi: F_n \to F_n$ is {\bf fully irreducible} or { \bf irreducible with irreducible powers (iwip)} if there are no positive integer $k$, proper free factor $A \le F_n$, and element $x \in F_n$ such that $\phi^k(A) \le x A x^{-1}$. \end{defn}

The following is an unpublished theorem due to Patrick Reynolds. Reynolds' statement is stronger than the version given here but this weaker form suffices for our purposes.

\begin{thm}[{\cite[Corollary 5.5]{Rey}}] If $\phi:F_n \to F_n$ is nonsurjective and fully irreducible, then it is induced by an immersion $f:\Gamma \to \Gamma$. \end{thm}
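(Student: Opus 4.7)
My plan is to iteratively fold a topological representative of $\phi$ and to use full irreducibility to force the resulting sequence of graph maps to stabilize at an immersion. I will sketch two routes; both hinge on converting the algebraic iwip hypothesis into a termination statement for a combinatorial process on graphs.

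For the first route, fix a rose $R$ with $\pi_1(R) \cong F_n$ and any topological representative $f_0 : R \to R$ of $\phi$. Apply Stallings folds to $f_0$ to factor it as $R \xrightarrow{p_0} \Gamma_1 \xrightarrow{\iota_1} R$, where $p_0$ is a quotient by folds and $\iota_1$ is an immersion. Since $\phi$ is injective, $p_0$ is $\pi_1$-injective and hence a homotopy equivalence, so $\Gamma_1$ is a core graph of rank $n$. Choose a homotopy inverse to $p_0$ to reinterpret $\iota_1$ as a self-map $f_1 : \Gamma_1 \to \Gamma_1$, and iterate. This produces a sequence of rank-$n$ core graphs $(\Gamma_k)$ with representatives $f_k$ of $\phi$, each obtained from the previous by folding followed by a remarking of the target; if for some $k$ the map $f_k$ has no illegal turns, we are done.

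The crux is termination, and this is the main obstacle. Full irreducibility must be used to rule out the scenario in which illegal turns persist indefinitely. Concretely, I would track a complexity invariant such as the total number of illegal turns across all vertices, or a weighted Euler-characteristic measure of the discrepancy between $\Gamma_{k+1}$ and its image in $\Gamma_k$, and show it strictly descends at each non-immersion step. A stable positive value of this invariant would correspond to a periodic configuration of the fold-and-remark dynamics that, once unpacked, produces a proper iterate-invariant free factor of $F_n$, contradicting the iwip hypothesis. Nonsurjectivity also plays a role: in the surjective case $\iota_1$ is forced to be $\pi_1$-surjective and hence a homotopy equivalence, leaving no room for genuine simplification.

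An alternative and possibly cleaner route avoids iterative folding and works directly with the Stallings graphs $\Delta_k \subset R$ of the descending chain $\phi^k(F_n) \leq F_n$. These fit into an inverse system of immersions $\cdots \to \Delta_{k+1} \to \Delta_k \to \cdots \to R$, and the isomorphism $\phi^k : F_n \xrightarrow{\sim} \pi_1(\Delta_k)$ makes each $\Delta_k$ a graph model of $F_n$ on which the connecting map represents $\phi$. The goal becomes showing that $\Delta_{k+1} \to \Delta_k$ is a graph isomorphism for all sufficiently large $k$; once this holds, the stabilized connecting map is the desired self-immersion. The hardest step is again proving stabilization, phrased here as an inverse-limit rigidity statement, where full irreducibility is exactly what prevents a proper iterate-invariant subgraph from forcing perpetual combinatorial change of the $\Delta_k$.
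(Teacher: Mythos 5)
Note first that the paper offers no proof of this statement: it is imported as an unpublished theorem of Reynolds and used as a black box, so the only question is whether your proposal stands on its own. It does not, because in both routes the entire mathematical content of the theorem is concentrated in the step you explicitly leave open. In Route 1 the fold-and-remark scaffolding is fine (and $p_0$ being a homotopy equivalence is correct), but termination is not a technicality to be supplied later --- it is the theorem. For a general injective nonsurjective endomorphism the process does \emph{not} terminate; the paper itself emphasizes that not every such endomorphism is induced by an immersion (see its closing discussion of maps like $a\mapsto a$, $b\mapsto a^k b\cdots b$). So all the work must be done by full irreducibility, and you give no actual mechanism: the complexity invariants you propose (number of illegal turns, an Euler-characteristic discrepancy) do not obviously descend, since folding can create new illegal turns. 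This is exactly the difficulty that forces the Bestvina--Handel algorithm to use a delicately chosen complexity, and even that machinery only outputs a train track map, which for an iwip automorphism is typically \emph{not} an immersion --- so ``iwip $\Rightarrow$ folding stabilizes at an immersion'' cannot be true without using nonsurjectivity in an essential, and so far unexplained, way. The sentence claiming that a stable positive value of the invariant ``unpacks'' to an iterate-invariant proper free factor is an assertion of the theorem, not an argument for it.

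Route 2 contains an outright error in addition to the same gap. The connecting immersion $\Delta_{k+1}\to\Delta_k$ of Stallings cores can never be a graph isomorphism: that would force $\phi^{k+1}(F_n)=\phi^k(F_n)$, hence (applying $\phi^{-k}$ on the image) $\phi(F_n)=F_n$, contradicting nonsurjectivity. Indeed $\phi^{k+1}(F_n)$ has infinite index in $\phi^k(F_n)$, since a proper finite-index subgroup of a rank-$n$ free group would have rank greater than $n$. What you presumably intend is that the $\Delta_k$ eventually have constant isomorphism type and that, under suitable identifications, the connecting immersions become a single fixed self-immersion; but proving that stabilization is again precisely the content of Reynolds' result. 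In short, the outline correctly locates where full irreducibility and nonsurjectivity must enter, but the key termination/stabilization statement is never proved, so the proposal does not establish the theorem.
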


\begin{cor}\label{cor2} Let $\phi:F_n\to F_n$ be a fully irreducible endomorphism. The following are equivalent:
\begin{enumerate}
\item $F_n*_\phi$ is word-hyperbolic. 
\item $F_n*_\phi$ has no $BS(1,d)$ subgroups for $d \ge 1$.
\item There are no $k, d \ge 1$ and nontrivial $g \in F_n$ such that $[\phi^k(g)] = [g^d]$.
\end{enumerate}\end{cor}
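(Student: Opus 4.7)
The plan is to split on surjectivity of $\phi$. If $\phi$ is not surjective, Reynolds' theorem quoted just above furnishes an immersion $f \colon \Gamma \to \Gamma$ inducing $\phi$ up to an inner automorphism, so that $F_n *_\phi \cong \pi_1(M_f)$. Under the standard bijection between conjugacy classes in $\pi_1(\Gamma)$ and free homotopy classes of loops in $\Gamma$, condition~(3) of the corollary reads exactly as condition~(3) of Theorem~\ref{mainthm}. The main theorem then delivers all three equivalences in this branch; note that full irreducibility is used only to invoke Reynolds and is otherwise superfluous in this case.

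If instead $\phi$ is an automorphism, Theorem~\ref{mainthm} is not directly available (an automorphism of $F_n$ induced by an immersion of a core graph of rank $n$ must be a graph homeomorphism, which generic iwip automorphisms are not), so I would appeal to the Bestvina--Feighn/Brinkmann theorem quoted in the introduction. That theorem identifies~(1) with atoroidality of $\phi$ and with absence of $\mathbb{Z}^2$ subgroups in $F_n \rtimes_\phi \mathbb{Z}$. Every word-hyperbolic group excludes all $BS(1,d)$ subgroups, giving (1)$\Rightarrow$(2); conversely, since $\mathbb{Z}^2 \cong BS(1,1)$, absence of every $BS(1,d)$ forces absence of $\mathbb{Z}^2$ and returns~(1) via the cited theorem. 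Likewise, an invariant loop of degree $1$ is exactly a nontrivial periodic conjugacy class, so a failure of condition~(3) at $d=1$ is precisely a failure of atoroidality.

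What remains is to show that an automorphism cannot have an invariant loop of degree $d \ge 2$, so that~(3) collapses to atoroidality. Suppose $\phi^k(g) = x g^d x^{-1}$ with $d \ge 2$ and $g \ne 1$. Applying $\phi^{-k}$ and using the identity $z h^d z^{-1} = (z h z^{-1})^d$ gives $g = (y \phi^{-k}(g) y^{-1})^d$ for some $y \in F_n$, exhibiting $g$ as a literal $d$-th power. Iterating the same manipulation shows $g$ is a $d^n$-th power for every $n \ge 1$; since the root of a nontrivial element in a free group is unique and its exponent is finite, this forces $g = 1$, a contradiction.

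I do not foresee a genuine obstacle: the corollary is essentially a packaging of Theorem~\ref{mainthm} and the Bestvina--Feighn/Brinkmann theorem, glued together by Reynolds on the nonsurjective side and by the root-uniqueness argument on the surjective side. The only mildly subtle step is the degree-reduction argument, which crucially exploits invertibility of $\phi$ to pull the ``$d$-th power'' property from $\phi^k(g)$ back onto $g$ itself --- this is exactly why the nonsurjective case requires the full force of Theorem~\ref{mainthm}, since no such reduction is available there.
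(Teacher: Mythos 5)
Your proposal is correct and follows essentially the same route as the paper: split on surjectivity, invoke Reynolds plus Theorem~\ref{mainthm} in the nonsurjective case, and fall back on the Bestvina--Feighn/Brinkmann theorem in the automorphism case. Your degree-reduction argument (no invariant loop of degree $d\ge 2$ for an automorphism) is correct but not actually needed for $(3)\Rightarrow(1)$, since the $d=1$ instance of condition~(3) already yields atoroidality, which is all Brinkmann's theorem requires.
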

\begin{proof} We need to show $(3){\implies}(1)$: If $\phi$ is surjective, then this is Brinkmann's theorem. So we may assume $\phi$ is nonsurjective. By Reynolds' theorem, $\phi$ is induced by an immersion $f:\Gamma \to \Gamma$. The implication now follows from Theorem \ref{mainthm}.
\end{proof}

The following related corollary was suggested by Ilya Kapovich.

\begin{cor}\label{cor3} Let $\phi:F_2\to F_2$ be an injective endomorphism. The following are equivalent:
\begin{enumerate}
\item $F_2*_\phi$ is word-hyperbolic. 
\item $F_2*_\phi$ has no $BS(1,d)$ subgroups for $d \ge 1$.
\item There are no $k, d \ge 1$ and nontrivial $g \in F_n$ such that $[\phi^k(g)] = [g^d]$.
\end{enumerate}\end{cor}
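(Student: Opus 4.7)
The plan is to reduce $(3)\Rightarrow(1)$ to Corollary \ref{cor2}. The other two implications are essentially as in Theorem \ref{mainthm}: $(1)\Rightarrow(2)$ is general (BS subgroups obstruct word-hyperbolicity), and $(2)\Rightarrow(3)$ is Kapovich's normal-form argument that constructs an injective homomorphism $BS(1,d)\to F_2*_\phi$ from an invariant loop and does not require $f$ to be an immersion.

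For $(3)\Rightarrow(1)$, I would first dispatch the surjective case: if $\phi$ is onto, then it is an automorphism of $F_2$, condition (3) says $\phi$ is atoroidal, and Brinkmann's theorem gives word-hyperbolicity. Otherwise $\phi$ is non-surjective, and the plan is to show that (3) forces $\phi$ to be fully irreducible, after which Corollary \ref{cor2} finishes the proof. The key structural input is that every nontrivial proper free factor $A\le F_2$ is cyclic, generated by a primitive element $a$. If $\phi$ were reducible, there would exist $k\ge 1$ and $x\in F_2$ with $\phi^k(\langle a\rangle)\le x\langle a\rangle x^{-1}$, equivalently $\phi^k(a)=xa^nx^{-1}$ for some nonzero integer $n$ (nonzero by injectivity). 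When $n\ge 1$ this directly violates (3); when $n\le -1$, one more iterate yields
\[ \phi^{2k}(a)=\phi^k(x)\,\phi^k(a)^n\,\phi^k(x)^{-1}=\bigl(\phi^k(x)\,x\bigr)\,a^{n^2}\,\bigl(\phi^k(x)\,x\bigr)^{-1}, \]
an invariant loop of degree $n^2\ge 1$, again violating (3).

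The only subtlety I anticipate is handling negative $n$, resolved by the one-extra-iterate trick above; beyond that, the corollary is a direct combination of the rank-$2$ structure of free factors with Brinkmann's theorem and Corollary \ref{cor2}.
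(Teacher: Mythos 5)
Your proposal is correct and follows essentially the same route as the paper: use the fact that proper free factors of $F_2$ are cyclic to upgrade condition (3) to full irreducibility, then invoke Reynolds' theorem and the main theorem (via Corollary \ref{cor2}). The only differences are cosmetic — the paper rules out the surjective case outright by noting every automorphism of $F_2$ preserves the conjugacy class of $[a,b]$ up to inversion, whereas you route it through Brinkmann's theorem as in Corollary \ref{cor2}, and your one-extra-iterate trick for a negative exponent $n$ makes explicit a point the paper leaves implicit.
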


\begin{proof} We need to show $(3){\implies}(1)$: In $F_2 = \langle a, b \rangle$, automorphisms either fix the conjugacy class of $[a,b]=aba^{-1}b^{-1}$ or map it to the class of its inverse  $bab^{-1}a^{-1}$. Since $\phi$ is atoroidal, it must be nonsurjective. Proper free factors of $F_2$ are infinite cyclic; as no power of $\phi$ maps a nontrivial element to a conjugate of its power, then no power of $\phi$ maps a proper free factor into a conjugate of itself. Thus $\phi$ is fully irreducible.
By Reynolds' theorem, $\phi$ is induced by an immersion $f:\Gamma \to \Gamma$. The lack of $f$-invariant loops implies $\pi_1(M_f) \cong F_2*_\phi$ is word-hyperbolic by Theorem \ref{mainthm}.\end{proof}

We will now give some examples. For $n \in \{2,3,4\}$, let $\varphi_n: F_n \to F_n$ be defined by:
\[  \varphi_2: \begin{aligned} a &\mapsto b a^{-1} \\ b &\mapsto a^2(a b^{-1})^2 a^2 \end{aligned} 
\qquad \varphi_3: \begin{aligned} a &\mapsto b \\ b  &\mapsto a c a^{-1} \\ c &\mapsto a b^{-1}ac^{-1}a^{-1}ba \end{aligned}
 \qquad \varphi_4: \begin{aligned} a &\mapsto b \\ b  &\mapsto a c \\ c &\mapsto d a^{-1} \\ d &\mapsto ab^{-1}ad^{-1}b^{-1}aca \end{aligned} \]
 
$G = F_2 *_{\varphi_2}$ is a 1-relator group and we used Brown's algorithm \cite{Bro} to compute the BNS invariant, a cone $\Sigma(G) \subset H^1(G; \mathbb R)$. The two other endomorphisms above were found by choosing two rational rays in $\Sigma(G)$ and rewriting the presentation of $G$ as an asc. HNN-ext. with respect to these rays. So $G \cong F_3 *_{\varphi_3} \cong F_4*_{\varphi_4}$. This group's BNS invariant has no symmetric subset, so we can conclude that $G$ never splits as a free-by-cyclic group \cite{BNS}. Consequently, Brinkmann's result (and Kapovich's as far as we can tell) can not be used to prove word-hyperbolicity of $G$.

\begin{claim*} No iterate of $\varphi_2, \varphi_3$, or $\varphi_4$ maps a nontrivial element to a conjugate of its power and $G$~is word-hyperbolic.\end{claim*}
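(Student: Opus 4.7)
The plan is to apply Theorem~\ref{mainthm}, through its algorithmic incarnation Corollary~\ref{algo2}, to the $F_2$-endomorphism $\varphi_2$. This will yield word-hyperbolicity of $G$ and no invariant loop for $\varphi_2$; the corresponding statements for $\varphi_3$ and $\varphi_4$ then follow via Kapovich's lemma applied to the isomorphism $G \cong F_3*_{\varphi_3} \cong F_4*_{\varphi_4}$.

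First I would realize $\varphi_2$ as an immersion $f_2:\Gamma \to \Gamma$ with $\pi_1(\Gamma) = F_2$. The topological representative on the rose $R_2$ is not an immersion---at the unique vertex $v$ one computes $df_v(a^-) = df_v(b^+) = a^+$---but a finite sequence of Stallings folds produces an immersion on a suitable subdivided graph. With $f_2$ in hand, I would use Remark~\ref{algo} to compute the explicit bound $L(f_2)$, iteratively construct the pullbacks $\Gamma_1 \subset \cdots \subset \Gamma_{L(f_2)}$ as fibered products in $\Gamma \times \Gamma$, and verify $\hat\Gamma_{L(f_2)} = \emptyset$ by inspection of the components. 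Corollary~\ref{algo2} then certifies that $\varphi_2$ has no invariant loop of degree $d \ge 2$.

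To rule out degree-one invariant loops, I would use abelianization: the matrix of $\varphi_2^{\mathrm{ab}}$ with respect to $\bar a, \bar b$ is $\bigl(\begin{smallmatrix} -1 & 6 \\ 1 & -2 \end{smallmatrix}\bigr)$, with eigenvalues $1$ and $-4$. Any nontrivial $g$ with $\varphi_2^k(g) \sim g$ has $\bar g$ a $\varphi_2^{k,\mathrm{ab}}$-fixed vector, forcing $\bar g \in \mathbb{Z}\cdot(3,1)$. By root uniqueness in $F_2$ we may take $g$ primitive, leaving the unique candidate conjugacy class $[a^3 b]$; a direct computation ($\varphi_2(a^3b)$ has cyclically reduced length $12$, not conjugate to $a^3b$), together with the growth of $f_2$-iterates on loops supported on expanding edges of $\Gamma$ (in the sense of Lemma~\ref{explem}), rules out the whole orbit. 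Hence condition (3) of Corollary~\ref{cor3} holds for $\varphi_2$, and $G = F_2*_{\varphi_2}$ is word-hyperbolic.

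Finally, for $\varphi_3$ and $\varphi_4$: $G$ being word-hyperbolic contains no $BS(1,d)$ subgroup, and Kapovich's lemma (cited in the proof of $(2){\implies}(3)$ of Theorem~\ref{mainthm}) applies to any $\pi_1$-injective realization of an endomorphism, associating an embedded $BS(1,d) \hookrightarrow \pi_1(M_f)$ to each invariant loop of degree $d$. The contrapositive rules out invariant loops for both $\varphi_3$ and $\varphi_4$. The main obstacle is the pullback computation for $\varphi_2$: constructing a small immersion representative so that $L(f_2)$ is manageable, and then running the iterated fibered-product construction; a secondary subtlety is the atoroidality step, where abelianization luckily cuts the search for degree-one invariant loops down to a single candidate primitive that can be dispatched by the immersion's growth behavior.
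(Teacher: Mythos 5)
Your overall strategy (verify condition (3) of Theorem~\ref{mainthm} for one of the three presentations, then transfer ``no invariant loop'' to the other two via the isomorphism $G\cong F_2*_{\varphi_2}\cong F_3*_{\varphi_3}\cong F_4*_{\varphi_4}$ and the implication $(1){\implies}(2){\implies}(3)$) is the same as the paper's, and your last paragraph handling the other two endomorphisms is fine. But you chose the hardest of the three presentations to attack directly, and two of your steps have genuine gaps. First, realizing $\varphi_2$ by a self-immersion: Stallings folds turn a non-immersed map $\Gamma\to\Gamma$ into an immersion $\Gamma'\to\Gamma$ with a \emph{different} domain, so folding does not by itself produce a self-map $f_2:\Gamma\to\Gamma$; indeed the paper stresses that some injective endomorphisms of $F_2$ are induced by no immersion at all, and its own route to an immersion for $\varphi_2$ (Reynolds' theorem via full irreducibility, as in Corollary~\ref{cor3}) already requires atoroidality as an input --- the very thing you are trying to prove. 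Second, and more seriously, the atoroidality step is wrong as stated: the condition $[\varphi_2^k(g)]=[g]$ only constrains the homology class $\bar g$ to lie in the eigenvalue-$1$ eigenspace $\mathbb{Z}\cdot(3,1)$ (which also contains $\bar g=0$, i.e.\ the entire commutator subgroup), and there are infinitely many conjugacy classes of non-proper-powers with any given abelianization, so there is no ``unique candidate conjugacy class $[a^3b]$'' to dispatch. The appeal to ``growth on expanding edges in the sense of Lemma~\ref{explem}'' is circular, since that lemma takes atoroidality as a hypothesis.

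The paper avoids all of this by working with $\varphi_3$ instead: it exhibits an explicit \emph{irreducible} immersion $f_3$ with Perron--Frobenius eigenvalue $\lambda_3=1+\sqrt{3}$, so every immersed loop grows uniformly by $\lambda_3$ under iteration; an invariant loop $f_3^k(\sigma)\simeq\sigma^d$ would force $\lambda_3^k=d\in\mathbb{Z}$, which is impossible. This single eigenvalue computation kills invariant loops of \emph{all} degrees at once --- no pullback computation, no separate degree-one argument --- and then Corollary~\ref{cor2} gives hyperbolicity of $G$ and, applied to the other two presentations, the remaining claims. If you want to salvage your approach for $\varphi_2$ directly, the fix for the atoroidality step is the same idea: show the transition matrix of the immersion $f_2$ is primitive with irrational (or at least non-integral-power) Perron--Frobenius eigenvalue, rather than reasoning through the abelianization of $\varphi_2$, whose eigenvalue $1$ reflects homological cancellation and not actual loop growth.
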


\begin{proof}[Sketch proof]By the preceding discussion, these endomorphisms were constructed so that their HNN-extensions are isomorphic to the same 1-relator group $G$. The endomorphism $\varphi_3$ is induced by an {\it irreducible} immersion $f_3$ (See second graph in Figure~\ref{fig4}). This immersion has {\it Perron-Frobenius} eigenvalue $\lambda_3 = 1 + \sqrt{3}$ which is not a root of any integer. As, $f_3$ expands all immersed loops uniformly by $\lambda_3$, it has no invariant loop. By Corollary \ref{cor2}, $G \cong F_3 *_{\varphi_3}$ is word-hyperbolic. But $G =  F_2 *_{\varphi_2} \cong  F_4 *_{\varphi_4}$. Applying Corollary \ref{cor2} again, no iterate of $\varphi_2$ and $\varphi_4$ maps a nontrivial element to a conjugate of its power.
\end{proof}

As argued in the proof of Corollary \ref{cor3}, in $F_2$, the lack of $\varphi_2^k$-invariant  cyclic subgroups, for all $k \ge 1$ implies the endomorphism $\varphi_2$ is fully irreducible and induced by an immersion. We found immersions inducing both $\varphi_2$ and $\varphi_4$ (Figure~\ref{fig4}).

An interesting area of further study is characterizing all the nonsurjective injective endomorphisms that are not covered by the main theorem; more precisely, which nonsurjective endomorphisms are not induced by an immersion. For $F_2$, the answer seems to be only endomorphisms that take the following form up to change of basis:
\[  \psi: \begin{aligned} a &\mapsto a \\ b &\mapsto a^k b\ldots b \end{aligned} 
\qquad  \text{or} \qquad \psi: \begin{aligned} a &\mapsto a^{-1} \\ b  &\mapsto (a^k b\ldots b)^{-1} \end{aligned} 
\qquad \qquad (k\ge 1)\]

\begin{figure}
 \centering 
 \includegraphics[scale=0.6]{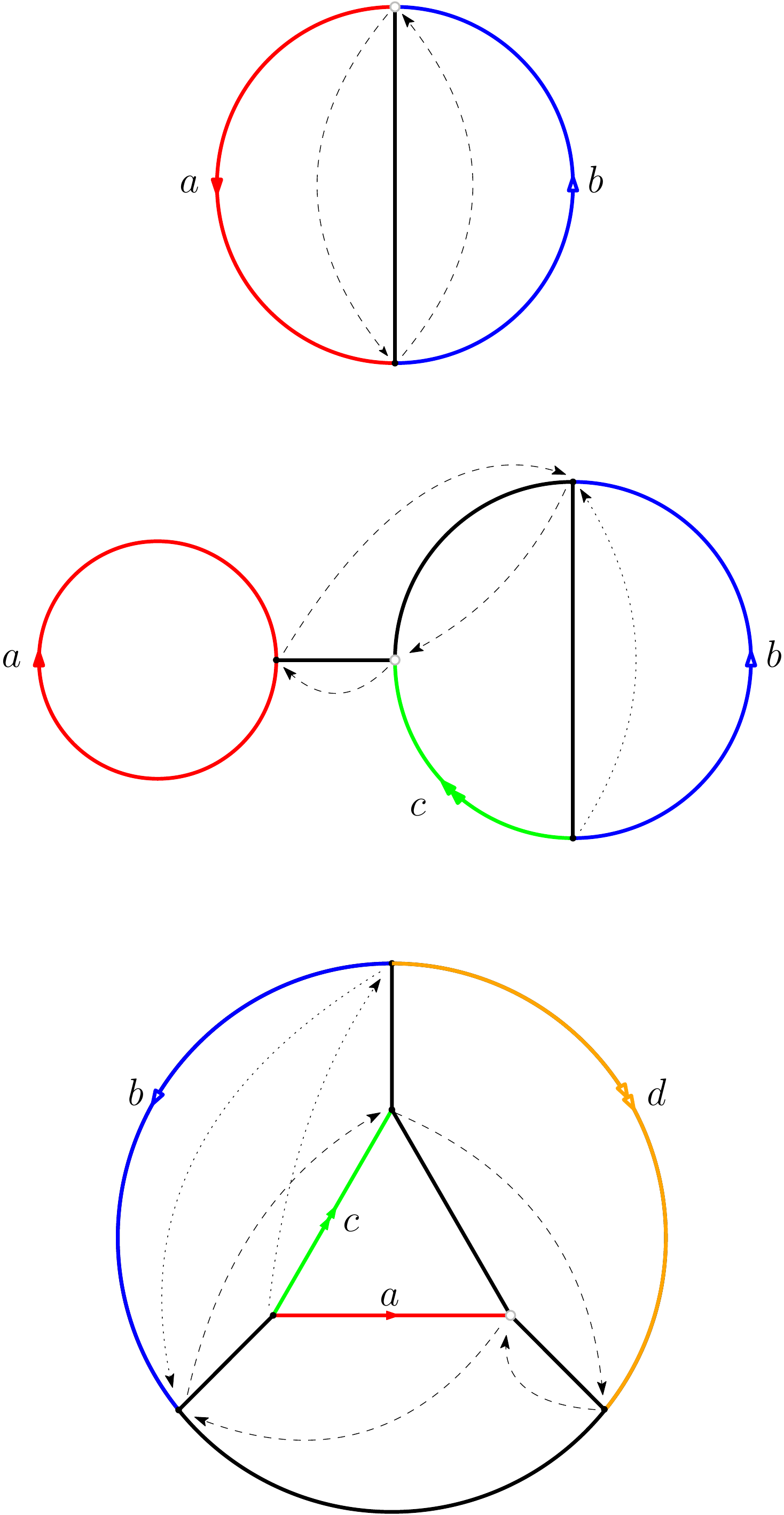}
 \caption{Markings for the graph immersions inducing $\varphi_i$.}
The dashed/dotted arrows shows the orbits of the vertices under the immersions. The white vertex is the marked point. The marking, vertex orbits, and endomorphisms' definitions are enough to (re)construct the immersions.
 \label{fig4}
\end{figure}

\newpage

\bibliography{refs}
\bibliographystyle{plain}

\end{document}